\let\origsection=\section \def\section{\@ifstar{\origsection*}{\mysection}} 
\def\mysection{\@startsection{section}{1}\z@{.7\linespacing\@plus\linespacing}{.5\linespacing}{\normalfont\scshape\centering\S}}
\renewcommand{\PrintDOI}[1]{\doi{#1}}
\numberwithin{equation}{section}
\numberwithin{figure}{section}
\let\polishlcross=\l
\def\l{\ifmmode\ell\else\polishlcross\fi}
\def\paragraph#1{%
  \noindent\textbf{#1.}\enspace}
\let\emptyset=\varnothing
\let\setminus=\smallsetminus
\def\moverlay{\mathpalette\mov@rlay}
\def\mov@rlay#1#2{\leavevmode\vtop{   \baselineskip\z@skip \lineskiplimit-\maxdimen
   \ialign{\hfil$\m@th#1##$\hfil\cr#2\crcr}}}
\newcommand{\charfusion}[3][\mathord]{
    #1{\ifx#1\mathop\vphantom{#2}\fi
        \mathpalette\mov@rlay{#2\cr#3}
      }
    \ifx#1\mathop\expandafter\displaylimits\fi}
\DeclareFontFamily{U}  {MnSymbolC}{}
\DeclareSymbolFont{MnSyC}         {U}  {MnSymbolC}{m}{n}
\DeclareFontShape{U}{MnSymbolC}{m}{n}{
    <-6>  MnSymbolC5
   <6-7>  MnSymbolC6
   <7-8>  MnSymbolC7
   <8-9>  MnSymbolC8
   <9-10> MnSymbolC9
  <10-12> MnSymbolC10
  <12->   MnSymbolC12}{}
\DeclareMathSymbol{\powerset}{\mathord}{MnSyC}{180}
\let\epsilon=\varepsilon
\let\rho=\varrho
\let\theta=\vartheta
\theoremstyle{plain}
\newtheorem{thm}{Theorem}[section]
\newtheorem{theorem}[thm]{Theorem}
\newtheorem{lemma}[thm]{Lemma}
\newtheorem{corollary}[thm]{Corollary}
\newtheorem{proposition}[thm]{Proposition}
\newtheorem{thm-intro}{Theorem}[]
\newtheorem*{claim*}{Claim}
\theoremstyle{definition}
\newtheorem{remark}[thm]{Remark}
\newtheorem{example}[thm]{Example}
\let\phi=\varphi
\newcommand{\no}[1]{}
\newcommand{\abs}[1]{\ensuremath{{\lvert {#1} \rvert}}}
\begin{document}

\author{J. Pascal Gollin and Karl Heuer}
\address{J. Pascal Gollin, Discrete Mathematics Group, Institute for Basic Science (IBS), 55, Expo-ro, Yuseong-gu, 34126 Daejeon, Republic of Korea}
\email{\tt pascalgollin@ibs.re.kr}
\address{Karl Heuer, Institute of Software Engineering and Theoretical Computer Science, Technische Universit\"{a}t Berlin, Ernst-Reuter-Platz 7, 10587 Berlin, Germany}
\email{\tt karl.heuer@tu-berlin.de}

\title[]{An analogue of Edmonds' Branching Theorem for infinite digraphs}
\subjclass[2010]{05C63, 05C20, 05C70}
\keywords{infinite graphs, digraphs, ends of graphs, pseudo-arborescence, directed topological walks}

\begin{abstract}
We extend Edmonds' Branching Theorem to locally finite infinite digraphs.
As examples of Oxley or Aharoni and Thomassen show, this cannot be done using ordinary arborescences, whose underlying graphs are trees.
Instead we introduce the notion of pseudo-arborescences and prove a corresponding packing result.
Finally, we verify some tree-like properties for these objects, but give also an example that their underlying graphs do in general not correspond to topological trees in the Freudenthal compactification of the underlying multigraph of the digraph.
\end{abstract}

\maketitle

\setcounter{footnote}{1}

\section{Introduction}
\label{sec:intro}

Studying how to force spanning structures in finite graphs is a basic task. 
The most fundamental spanning structure is a spanning tree, whose existence is already characterised by the connectedness of the graph. 
Moving on and characterising the existence of a given number of edge-disjoint spanning trees via an immediately necessary condition, Nash-Williams~\cite{nash-williams} and Tutte~\cite{tutte} independently proved the following famous theorem.

\begin{theorem}
    \label{thm:tree-packing}
    \cites{nash-williams, tutte}, 
    \cite{diestel_buch}*{Theorem~2.4.1}
    A finite multigraph~$G$ has~${k \in \mathbb{N}}$ edge-disjoint spanning trees 
    if and only if 
    for every partition~$\mathcal{P}$ of~${V(G)}$ there are at least ${k(|\mathcal{P}| - 1)}$ edges in~$G$ whose endvertices lie in different partition classes. 
\end{theorem}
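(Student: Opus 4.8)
The plan is to prove the two implications separately, with essentially all the work in the ``if'' direction. For necessity, suppose $G$ has edge-disjoint spanning trees $T_1,\dots,T_k$ and let $\mathcal{P}$ be a partition of $V(G)$. Contracting each class of $\mathcal{P}$ to a single vertex leaves each $T_i$ connected and spanning on the resulting $\abs{\mathcal{P}}$ vertices, so at least $\abs{\mathcal{P}}-1$ edges of $T_i$ survive the contraction; these are precisely the edges of $T_i$ with endvertices in distinct classes. As the $T_i$ are pairwise edge-disjoint, their sets of crossing edges are disjoint, and summing over $i$ gives at least $k(\abs{\mathcal{P}}-1)$ edges of $G$ between classes.

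For sufficiency I would use the extremal method. Applying the hypothesis to the partition into the components of $G$ forces $G$ to be connected, so every spanning tree has $\abs{V(G)}-1$ edges. Among all $k$-tuples $(F_1,\dots,F_k)$ of pairwise edge-disjoint spanning forests of $G$, fix one maximising the total size $\|F\| := \sum_i \abs{E(F_i)}$. It suffices to prove that $\|F\| = k(\abs{V(G)}-1)$, since then each $F_i$ is a spanning tree; so suppose for contradiction that $\|F\| < k(\abs{V(G)}-1)$, and aim to exhibit a partition violating the hypothesis.

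The engine is an exchange argument. Call an edge \emph{free} for a given family if it lies in none of the forests, and consider the \emph{elementary move} that picks a free edge $e$ and an index $i$ for which $F_i+e$ contains a cycle $C$, and replaces $F_i$ by $F_i-f+e$ for some $f\in C$; this preserves edge-disjointness and $\|F\|$ while turning $f$ into a new free edge. Let $A\subseteq E(G)$ be the set of edges that are free in at least one family reachable from the fixed maximum family by a (possibly empty) sequence of elementary moves, and let $\mathcal{P}$ be the partition of $V(G)$ into the components of the spanning subgraph $(V(G),A)$. By construction every edge outside $A$ is currently used by some forest, and the key structural claim is that, by maximality, each $F_i$ meets $A$ in a spanning forest of $(V(G),A)$, i.e.\ in exactly $\abs{V(G)}-\abs{\mathcal{P}}$ edges. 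Granting this, counting gives $\|F\| = \abs{E(G)\setminus A} + k\,(\abs{V(G)}-\abs{\mathcal{P}})$, so the assumed deficiency yields $\abs{E(G)\setminus A} < k(\abs{\mathcal{P}}-1)$. Since every edge joining two distinct classes of $\mathcal{P}$ lies outside $A$, fewer than $k(\abs{\mathcal{P}}-1)$ edges cross between classes, contradicting the hypothesis.

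I expect the structural claim to be the main obstacle: one must show that if some $F_i$ failed to connect an entire class of $\mathcal{P}$, then an edge of $A$ could be freed and used to augment $\|F\|$, contradicting maximality. Verifying that freeability really delivers such an augmenting move --- and hence that $A$ behaves like a ``tight'' set --- is exactly the combinatorial core of the Nash-Williams/Tutte theorem, which is equivalent to the matroid union theorem applied to the cycle matroid of $G$. A shorter alternative would be to invoke matroid union directly: its min-max formula produces a set $A$ with $\|F\|_{\max} = \abs{E(G)\setminus A} + k\,r(A)$, and the same count then reduces the whole direction to this single inequality.
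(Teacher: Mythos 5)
This statement is Theorem~\ref{thm:tree-packing}, which the paper only cites (to Nash--Williams, Tutte, and Diestel's book) and does not prove, so your attempt can only be judged against the standard proof. Your necessity argument is correct, and so is the counting framework for sufficiency: granted the structural claim, every edge outside $A$ is used by exactly one forest (unused edges of the current family lie in $A$ by definition, and the forests are edge-disjoint), so $\|F\| = \abs{E(G)\setminus A} + k\,(\abs{V(G)}-\abs{\mathcal{P}})$, and since every edge crossing $\mathcal{P}$ lies outside $A$, the assumed deficiency contradicts the hypothesis.

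The genuine gap is exactly where you say you expect it: the claim that each $F_i\cap A$ spans every component of $(V(G),A)$ is asserted but not proved, and it is the entire content of the theorem --- everything else is bookkeeping. To close it you need three facts. First, for any reachable family $F'$, any edge $e$ free in $F'$, and any index $i$, the cycle $C$ of $F_i'+e$ (which exists by maximality of $\|F'\|$, since otherwise $e$ could be added to $F_i'$) satisfies $E(C)\subseteq A$, because each $f\in C-e$ becomes free after the move $F_i'\mapsto F_i'+e-f$; in particular $F_i'\cap A$ contains a path in $A$ between the ends of $e$. Second, elementary moves are reversible, so all reachable families are mutually reachable. Third, for a fixed edge $a=xy\in A$ and fixed $i$, the property ``$F_i'\cap A$ contains an $x$--$y$ path'' is invariant under elementary moves: a move not touching $F_i'$ is harmless, and a move $F_i'\mapsto F_i'+e-f$ replaces $F_i'\cap A$ by $(F_i'\cap A)+e-f$, where the deleted edge $f$ lies on the cycle $C\subseteq (F_i'\cap A)+e$, so any path through $f$ can be rerouted along $C-f$. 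Combining these, the path property holds in the family in which $a$ is free, hence in the original family $F$, and concatenating over the edges of a path in $(V(G),A)$ shows that $F_i\cap A$ connects each component. Without this argument --- or an explicit appeal to the matroid union theorem, which you mention but do not carry out and which is itself a result of the same depth --- the sufficiency direction is not established.
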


Later, Edmonds~\cite{edmonds} generalised Theorem~\ref{thm:tree-packing} to finite digraphs, also involving a condition which is immediately seen to be necessary for the existence of the spanning structures. 
In his theorem, Edmonds considers as spanning structures \emph{out-arborescences} rooted in a vertex~$r$, i.e.~spanning trees whose edges are directed away from the root~$r$. 
His theorem immediately implies a corresponding result for \emph{in-arborescences} rooted in~$r$, i.e.~spanning trees directed towards~$r$, via reversing every edge in the digraph. 
For this reason we shall focus in this paper only on out-arborescences and denote them just by \emph{arborescences}. 

\begin{theorem}
    \label{thm:fin-edmonds}
    \cite{edmonds}, 
    \cite{bang-jensen}*{Theorem~9.5.1}
    A finite digraph~$G$ with a vertex~${r \in V(G)}$ has~${k \in \mathbb{N}}$ edge-disjoint spanning arborescences rooted in~$r$ 
    if and only if 
    there are at least~$k$ edges from~$X$ to~$Y$ for every bipartition~${(X, Y)}$ of~${V(G)}$ with~${r \in X}$. 
\end{theorem}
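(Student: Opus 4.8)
The plan is to first recast the stated crossing-edge condition in the more convenient form ``$d^-(S) \ge k$ for every nonempty $S \subseteq V(G) \setminus \{r\}$'', where $d^-(S)$ denotes the number of edges entering $S$ from $V(G) \setminus S$. This is exactly the hypothesis applied to the bipartition $(V(G) \setminus S, S)$, so the two formulations are interchangeable. The necessity direction is then immediate: if $A_1, \dots, A_k$ are edge-disjoint spanning arborescences rooted at $r$ and $S$ is nonempty with $r \notin S$, then each $A_i$ must reach every vertex of $S$ from $r \notin S$, so $A_i$ uses at least one edge entering $S$; edge-disjointness gives $d^-(S) \ge k$.

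For sufficiency I would argue by induction on $k$, the case $k = 0$ being vacuous. The base case $k = 1$ is just reachability: the inequality $d^-(S) \ge 1$ for all such $S$ forces every vertex to be reachable from $r$ (otherwise the set of unreachable vertices would have no entering edge), and any reachability tree is a spanning arborescence. For the inductive step it suffices to extract a single spanning arborescence $F$ rooted at $r$ for which $G - E(F)$ still satisfies $d^-_{G - E(F)}(S) \ge k - 1$ for every nonempty $S \subseteq V(G) \setminus \{r\}$; applying the induction hypothesis to $G - E(F)$ then yields $k - 1$ further edge-disjoint arborescences, which together with $F$ give the required $k$.

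The crux is therefore the construction of $F$. Call a nonempty set $S \subseteq V(G) \setminus \{r\}$ \emph{tight} if $d^-(S) = k$; deleting $E(F)$ preserves the $(k-1)$-condition precisely when $F$ enters every tight set at most (hence exactly) once, since $F$, being a spanning arborescence, already enters each such $S$ at least once. I would build $F$ greedily, growing a sub-arborescence on a reached set $U \ni r$ one edge at a time, always choosing an edge leaving $U$ that does not raise the $F$-count across any tight set above one. The engine making this work is the submodularity of $d^-$, namely $d^-(A) + d^-(B) \ge d^-(A \cup B) + d^-(A \cap B)$, from which a standard uncrossing argument shows that two crossing tight sets (with union avoiding $r$) have tight union and tight intersection. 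Consequently the tight sets relevant at each stage form a well-behaved, laminar-like structure that guarantees a safe extending edge always exists.

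I expect this uncrossing and extension step to be the main obstacle. The delicate point is to prove that, as long as $U \ne V(G)$, there is an edge from $U$ to $V(G) \setminus U$ whose addition keeps every tight set entered at most once: if no such edge existed, one would derive a contradiction by uncrossing the tight sets that block each candidate edge, producing either a tight set already entered twice by the partial tree or a set $S \not\ni r$ with $d^-(S) < k$. Getting this combinatorial bookkeeping right — formulating the correct invariant on $U$ and verifying it is maintained — is where the submodular structure of the tight family is essential and where the argument requires the most care.
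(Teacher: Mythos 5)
A preliminary remark: the paper does not prove this theorem at all; it is quoted as a classical result of Edmonds, with the reference to Bang-Jensen and Gutin (Theorem~9.5.1 there) standing in for a proof, and it is then used as a black box in Lemmas~\ref{lem:equi_lemma_walk} and~\ref{lem:k_many-reachable}. So the only comparison available is with the standard textbook argument, which is indeed the strategy you outline (Lov\'asz's proof): induct on~$k$ and extract one spanning arborescence~$F$ so that ${G - E(F)}$ still satisfies the condition with~${k-1}$. Your necessity direction and the base case~${k=1}$ are fine.

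Two genuine gaps remain in the inductive step. First, your invariant is misstated. You call~$S$ tight when~${d^-(S) = k}$ and claim that deleting~${E(F)}$ preserves the $(k-1)$-condition \emph{precisely} when~$F$ enters every tight set at most once. This is false: a set~$S$ with~${d^-(S) = k+1}$ that~$F$ enters three times ends up with~${d^-_{G - E(F)}(S) = k-2}$, yet it is not tight in your sense. Concretely, take~${k=2}$, vertices~${r, a, b, c}$, one edge from~$r$ to each of~${a, b, c}$, and all six edges inside~${\{a,b,c\}}$; the hypothesis holds, there are no sets with~${d^-(S)=2}$, so your constraint is vacuous, and the greedy process may output~${F = \{ra, rb, rc\}}$, after which~${d^-_{G-E(F)}(\{a,b,c\}) = 0}$ and no second arborescence exists --- even though two edge-disjoint ones do exist in this digraph. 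The invariant must be the dynamic one, ${d^-_{G - E(F')}(S) \ge k-1}$ for every partial branching~$F'$, with the ``dangerous'' sets being those where equality currently holds; these change as~$F'$ grows. Second, the heart of the proof --- that a safe extending edge always exists while~${U \ne V(G)}$ --- is only asserted. The uncrossing you describe does show that dangerous sets sharing a vertex are closed under union and intersection, but the obvious extremal choice does not close the argument: taking a maximal dangerous set~$X$ and an edge from~${X \cap U}$ to~${X \setminus U}$, a blocking dangerous set~$Y$ uncrosses only to~${Y \subseteq X}$, which is no contradiction. A choice that works: for each unreached~$v$ lying in some dangerous set, let~$Z_v$ be the intersection of all dangerous sets containing~$v$ (itself dangerous by uncrossing); pick~$v$ with~$Z_v$ inclusion-minimal; comparing~${d^-_{G-E(F')}(Z_v \setminus U) \ge k}$ with~${d^-_{G-E(F')}(Z_v) = k-1}$ yields an edge~${uw}$ with~${u \in Z_v \cap U}$ and~${w \in Z_v \setminus U}$, and minimality forces~${Z_w = Z_v \ni u}$, so~${uw}$ is safe. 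Without this (or an equivalent) extremal argument the proof is incomplete.
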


One of the main results of this paper is to extend Theorem~\ref{thm:fin-edmonds} to a certain class of infinite digraphs. 
There has already been work in this area. 
In order to mention two important results about this let us call a one-way infinite path all of whose edges are directed away from the unique vertex incident with only one edge a \emph{forwards directed ray}. 
Similarly, we call the digraph obtained by reversing all edges of a forwards directed ray a \emph{backwards directed ray}. 
Thomassen~\cite{Thomassen:edmonds} extended Theorem~\ref{thm:fin-edmonds} to infinite digraphs that do not contain a backwards directed ray, while Jo\'o~\cite{Joo:Edmonds} obtained an extension for infinite digraphs without forwards directed rays using different methods. 
In contrast to these two results we shall demand a local property for our digraphs by considering \emph{locally finite digraphs}, i.e.~digraphs in which every vertex has finite in- and out-degree. 
Similarly, undirected multigraphs are called \emph{locally finite} if every vertex has finite degree. 

When trying to extend Theorem~\ref{thm:fin-edmonds} to infinite digraphs it is important to know that a complete extension is not possible. 
The reason for this is that Oxley~\cite{oxley}*{Example~2} constructed a locally finite graph without two edge-disjoint spanning trees but fulfilling the
condition in Theorem~\ref{thm:tree-packing}. 
Following up, Aharoni and Thomassen~\cite{aharoni-thomassen}*{Theorem} gave a construction for further counterexamples to Theorem~\ref{thm:fin-edmonds}, which are all locally finite and can even be made $2k$-edge-connected for arbitrary~${k \in \mathbb{N}}$. 
Hence, using ordinary spanning trees for an extension of Theorem~\ref{thm:tree-packing} to locally finite graphs does not work. 
This immediately implies that extending Theorem~\ref{thm:fin-edmonds} to locally finite digraphs fails as well if ordinary arborescences are used. 
While Thomassen and Jo\'o could overcome this problem by forbidding certain one-way infinite paths, for us it is necessary to additionally change the notion of arborescence since the counterexamples to direct extensions of Theorem~\ref{thm:tree-packing} and Theorem~\ref{thm:fin-edmonds} to infinite (di)graphs are locally finite.

For undirected locally finite (connected) multigraphs~$G$ the problem of how to extend Theorem~\ref{thm:tree-packing} has successfully been overcome. 
The key was to not just consider~$G$ but the Freudenthal compactification ${\abs{G}}$~\cites{diestel_buch, Diestel:topGTsurvey} of the $1$-complex of~$G$. 
Instead of ordinary spanning trees, now packings of topological spanning trees of~$G$ are considered. 
We call a connected subspace of~$\abs{G}$ which is the closure of a set of edges of~$G$, contains all vertices of~$G$ but contains no homeomorphic image of the unit circle~${S^1 \subseteq \mathbb{R}^2}$, a \emph{topological spanning tree} of~$G$. 
There is an equivalent but more combinatorial, and in particular finitary, way of defining topological spanning trees of~$G$. 
They are precisely the closures in~$\abs{G}$ of the minimal edge sets that meet every finite cut of~$G$~\cite{diestel_buch}. 
As already observed by Tutte, this finitary condition can be used to obtain the following packing theorem for disjoint edge sets each meeting every finite cut, via the compactness principle. 

\begin{theorem}
    \label{thm:inf-tutte-packing}
    \cite{tutte}
    A locally finite multigraph~$G$ has~${k \in \mathbb{N}}$ disjoint edge sets each meeting every finite cut of~$G$ 
    if and only if 
    for every finite partition~$\mathcal{P}$ of~${V(G)}$ there are at least~${k(|\mathcal{P}| - 1)}$ edges in~$G$ whose endvertices lie in different partition classes. 
\end{theorem}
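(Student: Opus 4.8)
The plan is to prove both implications, deriving the theorem from the finite packing theorem (Theorem~\ref{thm:tree-packing}). The necessity (``only if'') direction is a short counting argument, while the sufficiency (``if'') direction is where the real work lies: I would obtain the infinitely many disjoint edge sets as a limit of finite spanning-tree packings via a compactness argument, exactly as the text advertises.

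For necessity, suppose $F_1, \dots, F_k$ are disjoint edge sets each meeting every finite cut of $G$, and let $\mathcal{P}$ be a finite partition of $V(G)$ into $p = |\mathcal{P}|$ classes. Let $H := G/\mathcal{P}$ be the finite multigraph obtained by contracting each class of $\mathcal{P}$ to a single vertex and discarding the resulting loops; then $H$ has $p$ vertices and $E(H)$ is precisely the set of $\mathcal{P}$-crossing edges of $G$. Every cut of the finite graph $H$ is a finite cut of $G$ (a bipartition of $V(H)$ lifts to a bipartition of $V(G)$ respecting the classes), and any edge of $F_i$ meeting such a cut is automatically a crossing edge, hence lies in $F_i \cap E(H)$. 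Thus each $F_i \cap E(H)$ meets every cut of $H$, so the spanning subgraph it induces is connected and therefore has at least $p - 1$ edges. As the $F_i$ are pairwise disjoint, $|E(H)| \geq \sum_{i=1}^k |F_i \cap E(H)| \geq k(p-1)$, which is the desired bound. (Note the hypothesis forces $G$ to be connected, so $H$ is connected and the implication ``meets every cut $\Rightarrow$ connected spanning'' is valid.)

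For sufficiency I would first build finite approximations. Fix an exhaustion $V_1 \subseteq V_2 \subseteq \cdots$ of $V(G)$ by finite sets with union $V(G)$. Since $G$ is locally finite and connected, $G - V_n$ has only finitely many components; let $G_n$ be the finite multigraph obtained from $G$ by contracting each such component to a vertex and deleting loops. A partition of $V(G_n)$ lifts to a finite partition $\mathcal{P}$ of $V(G)$ with exactly the same crossing edges, so the hypothesis hands the bound $k(|\mathcal{P}| - 1)$ to $G_n$; hence $G_n$ satisfies the condition of Theorem~\ref{thm:tree-packing} and admits $k$ edge-disjoint spanning trees $T_1^n, \dots, T_k^n$. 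Recording this as a colouring $c_n \colon E(G) \to \{0, 1, \dots, k\}$ (an edge in $T_i^n$ gets colour $i$, every other edge colour $0$), I obtain a sequence of colourings in the compact space $\{0, 1, \dots, k\}^{E(G)}$.

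Finally I would extract a limit. By the compactness principle (equivalently König's Infinity Lemma, since $E(G)$ is countable), there is a colouring $c$ such that every finite set of edges is coloured as in $c_n$ for infinitely many $n$; set $F_i := c^{-1}(i)$. To verify that $F_i$ meets an arbitrary finite cut $C = E(A, B)$ of $G$, I would choose $n$ so large that $V_n$ contains all endpoints of the finitely many edges of $C$ and that $c_n$ agrees with $c$ on $C$. The key point to check is that $C$ is then a genuine cut of $G_n$: if some component of $G - V_n$ contained vertices on both sides of $C$, a path within that component from $A$ to $B$ would traverse an edge of $C$ with both endpoints outside $V_n$, contradicting the choice of $n$. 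Since the spanning tree $T_i^n$ meets every cut of $G_n$, some edge of $C$ has colour $i$ under $c_n$, hence under $c$, so $F_i \cap C \neq \emptyset$. I expect this transfer step---ensuring that a fixed finite cut of $G$ survives as a cut of the finite contraction $G_n$ for large $n$, so that the spanning-tree property applies---to be the main obstacle; the compactness extraction itself is routine.
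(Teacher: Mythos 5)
Your proof is correct and follows exactly the route the paper indicates for this theorem (which it only cites, attributing to Tutte the derivation from the finite Theorem~\ref{thm:tree-packing} ``via the compactness principle''): contract the components of ${G - V_n}$ to obtain finite multigraphs~$G_n$, apply the finite packing theorem there, and extract a limit colouring by K\"onig's lemma, checking that each fixed finite cut of~$G$ survives as a cut of~$G_n$ for large~$n$. This is also precisely the exhaustion-and-contraction technique the paper itself employs for its directed analogue, Lemma~\ref{lem:k_many-reachable}.
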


By the equivalence noted above, Theorem~\ref{thm:inf-tutte-packing} implies a packing result for topological spanning trees: 

\begin{theorem}
    \label{thm:tst-packing}
    \cite{diestel_buch}*{Theorem~8.5.7}
    A locally finite multigraph~$G$ has~${k \in \mathbb{N}}$ edge-disjoint topological spanning trees 
    if and only if 
    for every finite partition~$\mathcal{P}$ of~${V(G)}$ there are at least~${k(|\mathcal{P}| - 1)}$ edges in~$G$ whose endvertices lie in different partition classes. 
\end{theorem}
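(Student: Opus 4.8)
The plan is to reduce the statement to Theorem~\ref{thm:inf-tutte-packing} by exploiting the combinatorial description of topological spanning trees recalled above: they are exactly the closures in~$\abs{G}$ of the \emph{minimal} edge sets meeting every finite cut of~$G$. Since passing to the closure in~$\abs{G}$ adds only endvertices and ends but no further edges, the closure operation is injective on edge sets, and two topological spanning trees are edge-disjoint precisely when their underlying edge sets are disjoint. It therefore suffices to show that~$G$ has~$k$ edge-disjoint topological spanning trees if and only if~$G$ has~$k$ disjoint edge sets each meeting every finite cut, and then to invoke Theorem~\ref{thm:inf-tutte-packing}.

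For the forward implication I would take the~$k$ edge-disjoint topological spanning trees and pass to their underlying edge sets; by the description above each of these meets every finite cut, and they are pairwise disjoint by assumption. The backward implication carries the real content. Given~$k$ pairwise disjoint edge sets~${F_1, \dots, F_k}$, each meeting every finite cut, I would shrink each~$F_i$ to a \emph{minimal} subset~${F_i' \subseteq F_i}$ that still meets every finite cut. The~$F_i'$ remain pairwise disjoint, and taking their closures in~$\abs{G}$ then yields the desired~$k$ edge-disjoint topological spanning trees.

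The main obstacle is the existence of such minimal subsets in the infinite setting, which I would settle with Zorn's Lemma applied to the family of subsets of~$F_i$ meeting every finite cut, ordered by reverse inclusion. The crucial observation is that meeting every finite cut is preserved under intersections of decreasing chains: if~${(F_\alpha)}$ is such a chain and~$C$ is a fixed finite cut, then the sets~${F_\alpha \cap C}$ form a decreasing chain of nonempty subsets of the \emph{finite} set~$C$, so their intersection~${\left( \bigcap_\alpha F_\alpha \right) \cap C}$ is again nonempty. Hence~${\bigcap_\alpha F_\alpha}$ meets every finite cut and provides a lower bound for the chain, so Zorn's Lemma yields a minimal element. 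It is exactly the finiteness of the cuts that makes this compactness-type argument work, mirroring the role of finiteness in the passage from Theorem~\ref{thm:inf-tutte-packing} via the compactness principle.
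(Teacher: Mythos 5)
Your proposal is correct and follows essentially the same route as the paper, which derives this statement from Theorem~\ref{thm:inf-tutte-packing} via the stated equivalence between topological spanning trees and closures of minimal edge sets meeting every finite cut; your Zorn's Lemma argument correctly supplies the shrinking-to-minimal step that this reduction implicitly requires.
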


In the spirit of Tutte's approach, we prove the following packing theorem generalising Theorem~\ref{thm:fin-edmonds} to locally finite digraphs for what we call spanning pseudo-arborescence rooted in some vertex~$r$. 
For a locally finite weakly connected digraph~$G$ and~${r \in V(G)}$ we define a \emph{spanning pseudo-arborescence rooted in~$r$} as a minimal edge set ${F \subseteq E(G)}$ such that~$F$ contains, 
for every bipartition~${(X, Y)}$ of~${V(G)}$ with~${r \in X}$ and finitely many edges between~$X$ and~$Y$ in either direction, 
an edge directed from~$X$ to~$Y$.

\begin{theorem}
    \label{thm:weak-pseudo-edmonds}
    A locally finite weakly connected digraph~$G$ with~${r \in V(G)}$ has ${k \in \mathbb{N}}$ edge-disjoint spanning pseudo-arborescences rooted in~$r$ 
    if and only if 
    for every bipartition~${(X, Y)}$ of~${V(G)}$ with~${r \in X}$ and finitely many edges between~$X$ and~$Y$ in either direction 
    there are at least~$k$ edges from~$X$ to~$Y$.
\end{theorem}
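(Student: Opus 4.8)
The necessity is immediate and I would dispose of it first: if $F_1, \dots, F_k$ are edge-disjoint spanning pseudo-arborescences and $(X,Y)$ is a bipartition with $r \in X$ and only finitely many edges between $X$ and $Y$ in either direction, then by definition each $F_i$ contains an edge directed from $X$ to $Y$, and as the $F_i$ are edge-disjoint these are $k$ distinct edges. So the whole work lies in the sufficiency direction, for which my plan is to combine the finite theorem of Edmonds (Theorem~\ref{thm:fin-edmonds}) with a compactness argument, exactly in the spirit in which Theorem~\ref{thm:inf-tutte-packing} is obtained. The one twist is that a pseudo-arborescence is required to be \emph{minimal}, so I would first produce $k$ edge-disjoint edge sets that merely meet every relevant cut in the forward direction (call these \emph{precursors}) and only afterwards pass to minimal subsets.

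Since $G$ is weakly connected and locally finite, $E(G)$ is countable, so I would work in the compact product space $\{0,1,\dots,k\}^{E(G)}$, reading a point $\gamma$ as a colouring that assigns each edge either to one of the sets $F_1, \dots, F_k$ or to none. For a fixed bipartition $(X,Y)$ with $r \in X$ and finitely many edges between $X$ and $Y$ in either direction, the set of colourings for which every $F_i$ contains an edge directed from $X$ to $Y$ depends only on those finitely many crossing edges, hence is clopen. By the finite intersection property it then suffices to satisfy any finite family of these conditions at once.

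The heart of the matter is this finite step, which I expect to be the main obstacle. Given finitely many relevant bipartitions, let $E_0$ be the finite union of all edges crossing them in either direction. Deleting $E_0$ from the (connected) underlying multigraph leaves at most $\abs{E_0}+1$ components $V_1, \dots, V_p$; contracting each $V_l$ yields a finite multidigraph $H$ with root the class containing $r$. Because every chosen cut has all its crossing edges in $E_0$, each $V_l$ lies wholly on one side of each cut, so each cut descends to a bipartition of $V(H)$ with the root on the prescribed side. The crucial point is that conversely any bipartition $(\mathcal{X},\mathcal{Y})$ of $V(H)$ with the root in $\mathcal{X}$ pulls back to a bipartition $(X,Y)$ of $V(G)$ all of whose crossing edges lie in $E_0$ (an edge outside $E_0$ joins two vertices of a single class), hence a relevant one; by hypothesis it has at least $k$ edges from $X$ to $Y$, and these are precisely the edges from $\mathcal{X}$ to $\mathcal{Y}$ in $H$. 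Thus $H$ satisfies Edmonds' condition, and Theorem~\ref{thm:fin-edmonds} yields $k$ edge-disjoint spanning arborescences of $H$ rooted at the class of $r$. Colouring their edges back in $E_0$ accordingly gives a colouring under which every $F_i$ crosses each chosen cut forward, since a spanning out-arborescence meets in the forward direction every cut separating its root from another vertex.

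Compactness then delivers a single colouring realising $k$ edge-disjoint precursors $F_1, \dots, F_k$. To finish I would extract from each $F_i$ a minimal forward-crossing subset by Zorn's lemma applied to obtain a minimal element: for any relevant cut the forward-crossing edges form a finite set, so along any chain of forward-crossing subsets of $F_i$ the traces on this finite set form a chain of nonempty subsets of a finite set and hence have nonempty intersection, which shows that the intersection of the chain still crosses every relevant cut forward. Chains therefore have lower bounds, minimal elements exist, and the resulting $F_1', \dots, F_k'$ are edge-disjoint spanning pseudo-arborescences, as required.
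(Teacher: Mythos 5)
Your proof is correct, and its skeleton coincides with the paper's: reduce to finite digraphs by contraction, invoke Edmonds' theorem there, globalize by compactness, and finally pass to inclusion-minimal subsets by Zorn's lemma (the paper's Lemma~\ref{lem:min_reach_exist}, whose omitted proof is exactly your stabilization-on-a-finite-cut argument). The genuine difference is in how the compactness is organized. The paper (Lemma~\ref{lem:k_many-reachable}) fixes a vertex exhaustion ${S_0 \subsetneqq S_1 \subsetneqq \cdots}$ of ${V(G)}$, contracts the weak components of ${G - S_n}$ to obtain finite digraphs~$G_n$, and applies K\"onig's Infinity Lemma; this forces it to verify a compatibility condition between consecutive levels, namely that a $k$-tuple of edge-disjoint spanning $r$-reachable sets of~$G_{n+1}$ restricts to one of~$G_n$. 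You instead index the finite approximations by finite families of relevant cuts, contract the components remaining after deleting the finitely many crossing edges~$E_0$, and use the finite intersection property in the compact space ${\{0,\dots,k\}^{E(G)}}$; no compatibility between different finite subfamilies is needed, at the mild cost of checking that each cut-condition is closed (clear, since it depends on finitely many coordinates) and that every bipartition of the contracted digraph~$H$ with the root on the correct side pulls back to a \emph{relevant} bipartition of~$G$ (which you do correctly: an edge outside~$E_0$ has both ends in one contracted class, so all crossing edges lie in the finite set~$E_0$). Both routes hinge on the same observation that cuts of the contracted digraph correspond to finite cuts of~$G$, so neither is more general; yours is arguably a little cleaner to write for the vertex-rooted statement, while the paper's exhaustion-based setup is reused verbatim for Theorem~\ref{thm:pseudo-edmonds}, where the root may be an end and one must track which dummy vertex of~$G_n$ plays the role of~$r$ (your argument also extends to that case with the convention ${r \in \overline{X}}$, but you would then need to say which contracted class of~$H$ contains the end~$r$).
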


In fact we shall prove a slightly stronger version of this theorem, Theorem~\ref{thm:pseudo-edmonds}, which requires more notation. 

While minimal edges sets meeting every finite cut in an undirected multigraph turn out to be topological extensions of finite trees, there is no analogous topological interpretation of spanning pseudo-arborescences in terms of the Freudenthal compactification of the underlying multigraph. 
In Section~\ref{sec:pseudo-structure} we give an example of a digraph~$G$ with underlying multigraph~$H$ for which the closure in~$\abs{H}$ of the underlying undirected edges of any spanning pseudo-arborescence of~$G$ contains a homeomorphic image of~$S^1$. 
We shall be able to extend to pseudo-arborescences, in a suitable topological setting, the property of finite arborescences of being edge-minimal such that each vertex is still reachable by a directed path from the root. 
While in finite arborescences such directed paths are unique, however, their analogues in pseudo-arborescences are not in general unique.  
This will be illustrated by an example given in Section~\ref{sec:pseudo-structure}.

In order to formally encode reachability for infinite digraphs for our purpose, we introduce the notion of topological directed walks and paths via the Freudenthal compactification in Section~\ref{sec:top-dir}.
This section contains another main contribution of this paper, which is the extension of fundamental facts about the existence of directed walks and paths in finite digraphs to infinite ones using methods from topological infinite graph theory.

Finally, we prove the following theorem yielding a structural characterisation for spanning pseudo-arborescences. 

\begin{theorem}
    \label{thm:weak-r-reach_charaterisation}
    Let~$G$ be a locally finite weakly connected digraph and ${r \in V(G)}$. 
    Then the following statements are equivalent for an edge set ${F \subseteq E(G)}$ containing, 
    for every bipartition ${(X, Y)}$ of~${V(G)}$ with~${r \in X}$ and finitely many edges between~$X$ and $Y$ in either direction, 
    an edge from~$X$ to~$Y$. 
    \begin{enumerate}
        [label=(\roman*)]
        \item $F$ is a spanning pseudo-arborescence rooted in~$r$.
        \item For every vertex~${v \neq r}$ of~$G$ there is a unique edge in~$F$ whose head is~$v$, and no edge in~$F$ has~$r$ as its head. 
        \item For every weak component~$T$ of~${G[F]}$ the following holds: 
            If ${r \in V(T)}$, then~$T$ is an arborescence rooted in~$r$. 
            Otherwise, the underlying multigraph of~$T$ is a tree, $T$ contains a backwards directed ray and all other edges of~$T$ are directed away from that ray. 
    \end{enumerate}
\end{theorem}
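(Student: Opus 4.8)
The plan is to prove the cycle of implications (i)$\Rightarrow$(ii)$\Rightarrow$(iii)$\Rightarrow$(i). Throughout I reformulate the covering hypothesis as follows: for a set $Y\subseteq V(G)$ with $r\notin Y$ and only finitely many edges between $Y$ and $V(G)\setminus Y$, call $Y$ \emph{admissible}; then the standing hypothesis on $F$ says precisely that at least one edge of $F$ is directed into every admissible $Y$. Since $G$ is locally finite, every finite vertex set avoiding $r$ is admissible; applying this to singletons shows at once that each vertex $v\neq r$ has at least one incoming edge in $F$.

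For (i)$\Rightarrow$(ii) I first dispose of the root: an edge of $F$ with head $r$ is never directed into an admissible $Y$ (its head lies outside $Y$), so deleting it preserves the covering property, and minimality forbids it. The substantive point is uniqueness of incoming edges, and this is the step I expect to be the main obstacle. Here minimality provides, for each $e\in F$, an admissible \emph{witness} set $Y_e$ into which $e$ is the unique edge of $F$ directed. If some $v\neq r$ had two incoming edges $e_1,e_2\in F$ with witnesses $Y_1,Y_2$, then $v\in Y_1\cap Y_2$ while the tails of $e_1,e_2$ lie outside $Y_1,Y_2$ respectively. Exploiting submodularity of the function counting edges of $G[F]$ directed into a set, together with the fact that admissible sets are closed under intersection and union (the boundaries of $Y_1\cap Y_2$ and $Y_1\cup Y_2$ are contained in the union of the two given boundaries), I get that $Y_1\cap Y_2$ and $Y_1\cup Y_2$ are again admissible and that the submodularity inequality is tight, forcing exactly one edge of $F$ into $Y_1\cap Y_2$. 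But $e_1$ and $e_2$ are both directed into $Y_1\cap Y_2$, a contradiction. Hence each $v\neq r$ has exactly one incoming edge.

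For (ii)$\Rightarrow$(iii) I work inside a single weak component $T$ of $G[F]$. First, $F$ contains no directed cycle: a finite directed cycle $C$ avoids $r$ (as $r$ has no incoming edge), has finite boundary by local finiteness, and, since each of its vertices already spends its unique incoming edge on $C$, receives no edge of $F$ from outside, violating covering for $Y=V(C)$. The engine of the structural analysis is a counting identity: for a finite connected subtree $S$ of the underlying multigraph of $T$, comparing the sum of in-degrees over $S$ with the number of internal edges shows that exactly one edge of $F$ enters $S$ when $r\notin S$, and none when $r\in S$. This rules out undirected cycles (orienting a hypothetical cycle under the in-degree-one constraint would force a directed cycle), so the underlying multigraph of $T$ is a tree. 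Following incoming edges backwards from a vertex then yields either a walk reaching $r$ --- in the component of $r$, where the identity keeps every backward walk internal, giving an arborescence with all edges directed away from $r$ --- or, when $r\notin T$, an infinite backwards directed ray. In the latter case the entering-edge identity applied to the finite path between two vertices shows their backward rays share a tail, so all of them converge to a single backwards directed ray $R$; since the head of every edge off $R$ has its predecessor strictly closer to $R$, every such edge points away from $R$.

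Finally, (iii)$\Rightarrow$(i) is immediate: in each of the described components every vertex other than $r$ has exactly one incoming edge of $F$ and $r$ has none, so for any $e\in F$ with head $b$ the admissible singleton $Y=\{b\}$ is entered by $e$ alone, and deleting $e$ destroys the covering property. Thus no edge of $F$ is superfluous and $F$ is minimal, closing the cycle.
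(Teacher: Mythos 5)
Your proof is correct, and for the key implication (i)$\Rightarrow$(ii) it takes a genuinely different route from the paper. The paper derives uniqueness of the incoming edge at each $v\neq r$ from its topological machinery: by its Lemma on directed rooted arcs, $\overline{F}$ contains a directed $r$--$v$ arc $A$, which crosses the finite cut around $\{v\}$ in exactly one edge; a second incoming edge $e\notin A$ would, by minimality, admit a finite cut in which $e$ is the only $F$-edge from the $r$-side, contradicting the fact that $A$ must also cross that cut. Your argument replaces this with a purely combinatorial uncrossing: each $e\in F$ has a witness cut in which it is the unique $F$-edge directed across, the family of admissible sets is closed under intersection and union, and submodularity of the in-degree function forces the intersection of two witnesses for edges with the same head to receive exactly one $F$-edge, while both edges visibly enter it. This is the classical Lov\'asz-style uncrossing from the finite Edmonds theorem, and it buys independence from Section 3 of the paper (directed topological walks and arcs); the paper's route buys less case analysis here but at the cost of invoking the arc lemma, which it has built anyway for other purposes. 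Your (ii)$\Rightarrow$(iii) and (iii)$\Rightarrow$(i) are in substance the same as the paper's (exclude directed cycles via the covering remark applied to $V(C)$, exclude weak cycles by in-degree counting, follow unique incoming edges backwards to $r$ or to a backwards directed ray, and recover minimality from the singleton cuts), with your treatment of the convergence of backward rays to a common tail being somewhat more explicit than the paper's, which absorbs that claim into its definition of an arborescence rooted in an end. One small presentational caveat: your ``counting identity'' for finite subtrees $S$ is stated before the tree property is established and tacitly assumes the only $F$-edges inside $V(S)$ are those of $S$; the cleaner order, which you in fact follow in the surrounding sentences, is to first kill directed cycles, then weak cycles by the pigeonhole orientation argument, and only then use the parent map.
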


We prove a slightly more general version of Theorem~\ref{thm:weak-r-reach_charaterisation} in Section~\ref{sec:pseudo-structure} (cf.~Theorem~\ref{thm:r-reach_charaterisation}).

The structure of this paper is as follows. 
In Section~\ref{sec:prelim} we give basic definitions and fix our notation for directed and undirected (multi-)graphs. 
We in particular refer to the topology we consider on locally finite (weakly) connected digraphs and (undirected) multigraphs, and state some basic lemmas that we shall need for our main results. 
In Section~\ref{sec:top-dir} we extend fundamental lemmas about directed walks and paths in finite digraphs to locally finite (weakly) connected digraphs. 
Section~\ref{sec:pseudo-packing} is dedicated to the proof of Theorem~\ref{thm:weak-pseudo-edmonds}. 
We complete the paper in Section~\ref{sec:pseudo-structure} with the proof of Theorem~\ref{thm:weak-r-reach_charaterisation} and a discussion about how much pseudo-arborescences resemble finite arborescences or topological trees.

\section{Preliminaries}
\label{sec:prelim}

For basic facts about finite and infinite graphs we refer the reader to \cite{diestel_buch}. 
As a source for facts about directed graphs we refer to \cite{bang-jensen}. 

Throughout the whole paper we shall often write ${G = (V, E)}$ for a digraph. 
Then~${V(G)}$ will denote its vertex set~$V$ and~${E(G)}$ its set of directed edges~$E$.
As for undirected graphs, we shall call the elements of~${E(G)}$ just edges. 
In general, we allow our digraphs to have parallel edges, but no loops. 
We view the edges of a digraph~$G$ as ordered pairs~${(a,b)}$ of vertices~${a, b \in V(G)}$ and shall write~${ab}$ instead of~${(a, b)}$, although this might not uniquely determine an edge. 
For an edge~${ab \in E(G)}$ we furthermore denote the vertex~$a$ as the \emph{tail} of~${ab}$ and~$b$ as the \emph{head} of~${ab}$. 

For two disjoint vertex sets ${X, Y}$ of a digraph~$G$ we denote by~${E(X, Y)}$ the set of all edges of~$G$ having their head in $X$ and their tail in $Y$ or their head in $Y$ and their tail in $X$.
By ${\overrightarrow{E}(X, Y)}$ we denote the set of edges of~$G$ that have their tail in~$X$ and their head in~$Y$. 
For a multigraph or digraph~$G$ we call the edge set~${E(X, Y)}$ a \emph{cut} if~${(X, Y)}$ is a bipartition of~${V(G)}$. 
If we introduce a cut~${E(X, Y)}$, then we implicitly want~${(X, Y)}$ to be the corresponding bipartition of~${V(G)}$ defining the cut. 
For a vertex set~${X \subseteq V(G)}$ we set~${d^+(X) := \abs{\overrightarrow{E}(X, V(G) \setminus X)}}$ and~${d^-(X) := \abs{\overrightarrow{E}(V(G) \setminus X, X)}}$. 
If~${X = \lbrace v \rbrace}$ for some vertex~${v \in V(G)}$, we write~${d^+(v)}$ instead of~${d^+(\lbrace v \rbrace)}$ and call it the \emph{out-degree} of~$v$. 
Similarly, we write~${d^-(v)}$ instead of~${d^-(\lbrace v \rbrace)}$ and call it the \emph{in-degree} of~$v$. 

For a finite non-trivial directed path~$P$ we call the vertex of out-degree~$1$ and in-degree~$0$ in~$P$ the \emph{start vertex of~$P$}. 
Similarly, we call the vertex of in-degree~$1$ and out-degree~$0$ in~$P$ the \emph{endvertex of~$P$}. 
If~$P$ consists only of a single vertex, we call that vertex the \emph{endvertex}~of~$P$. 

We define a \emph{finite directed walk} as a tuple~${(\mathcal{W}, <_{\mathcal{W}})}$ with the following properties: 
\begin{enumerate}
    \item $\mathcal{W}$ is a non-empty weakly connected graph with edge set ${\lbrace e_1, e_2, \ldots, e_n \rbrace}$ for some ${n \in \mathbb{N}}$ 
        such that the head of~$e_{i-1}$ is the tail of~$e_{i}$ for every ${i \in \mathbb{N}}$ satisfying~${2 \leq i \leq n}$. 
    \item $<_{\mathcal{W}}$ is a linear order on~${E(\mathcal{W})}$ stating that ${e_i <_{\mathcal{W}} e_j}$ if and only if ${i < j}$ for all~${i, j \in \lbrace 1, \ldots, n \rbrace}$. 
\end{enumerate}
Note that the second condition implies that the edges~${e_1, \ldots, e_n}$ are all distinct, i.e.~the walk traverses its edges only once. 
We call a directed walk without edges \emph{trivial} and call its unique vertex its \emph{endvertex}. 
Otherwise, we call the tail of~$e_1$ the \emph{start vertex} of~${(\mathcal{W}, <_{\mathcal{W}})}$ and the head of~$e_n$ the \emph{endvertex} of~${(\mathcal{W}, <_{\mathcal{W}})}$. 
If the start vertex and the endvertex of finite directed walk are equal, we call it \emph{closed}. 
Lastly, we call~${(\mathcal{W}, <_{\mathcal{W}})}$ a \emph{finite directed $s$--$t$~walk} for two vertices~${s, t \in V(\mathcal{W})}$ if~$s$ is the start vertex of~${(\mathcal{W}, <_{\mathcal{W}})}$ and~$t$ is the endvertex of~${(\mathcal{W}, <_{\mathcal{W}})}$. 
We might call a finite graph~$\mathcal{W}$ a finite directed walk and implicitly assume that there exists a linear order~${<_{\mathcal{W}}}$, which we then also fix, such that~${(\mathcal{W}, <_{\mathcal{W}})}$ is a finite directed walk. 
In particular, we will say that a finite directed walk~${(\mathcal{W}, <_{\mathcal{W}})}$ is contained in a graph~${G'}$ if~$\mathcal{W}$ is a subgraph of~${G'}$. 
Note that directed paths are directed walks when equipped with the obviously suitable linear order.

We define a \emph{ray} to be an undirected one-way infinite path. 
Any subgraph of a ray~$R$ that is itself a ray is called a \emph{tail} of~$R$. 

We call a weakly connected digraph~$R$ a \emph{backwards directed ray} if there is a unique vertex~${v \in V(R)}$ with~${d^-(v) = 1}$ and~${d^+(v) = 0}$ while~${d^-(w) = d^+(w) = 1}$ holds for every other vertex~${w \in V(R) \setminus \lbrace v \rbrace}$. 
A \emph{forwards directed ray} is analogously defined by interchanging~${d^-}$ and~${d^+}$. 

For an undirected multigraph~$G$ we define an equivalence relation on the set of all rays in~$G$. 
We call two rays in~$G$ \emph{equivalent} if they cannot be separated by finitely many vertices in~$G$. 
An equivalence class with respect to this relation is called an \emph{end} of~$G$. 
We denote the set of all ends of~$G$ by~${\Omega(G)}$. 
We define the \emph{ends of a digraph~$D$} precisely as the ends of its underlying multigraph. 
The set of all ends of~$D$ is also denoted by~${\Omega(D)}$. 
We say that a directed ray~$R$ of~$D$ is \emph{contained} in some end~${\omega \in \Omega(D)}$ if the underlying ray of~$R$ is contained in the end~$\omega$ of the underlying multigraph of~$D$.

We call a digraph~$A$ an \emph{out-arborescence rooted in~$r$} if~${r \in V(A) \cup \Omega(A)}$ and the underlying multigraph of~$A$ is a tree such that~${d^-(v) = 1}$ holds for every vertex~${v \in V(A) \setminus \lbrace r \rbrace}$ and additionally~${d^-(r) = 0}$ in the case that~${r \in V(A)}$, 
while we demand that~$r$ contains a backwards directed ray if~${r \in \Omega(A)}$. 

Note that if~${r \in V(A)}$, then~$A$ does not contain a backwards directed ray. 
In the case where~${r \in \Omega(A)}$, then~$r$ is the unique end of~$A$ containing a backwards directed ray, since a second one would yield a vertex with in-degree bigger than~$1$ by using that the underlying multigraph of~$A$ is a tree. 
Also note that if~$A$ is a finite digraph, the condition~${d^-(r) = 0}$ for~${r \in V(A)}$ in the definition of an out-arborescence rooted in~$r$ is redundant, because it is implied by the tree structure of~$A$. 

Similarly, an \emph{in-arborescence rooted in~$r$} is defined with~${d^-}$ replaced by~${d^+}$. 
Corresponding results about in-arborescences are immediate by reversing the orientations of all edges. 
For both types of arborescences we call~$r$ the \emph{root} of the arborescence. 
In this paper we shall only work with out-arborescences. 
Hence, we shall drop the prefix `out' and just write arborescence from now on. 

A multigraph is called \emph{locally finite} if each vertex has finite degree. 
We further call a digraph \emph{locally finite} if its underlying multigraph is locally finite. 

For a vertex set~$X$ in a locally finite connected multigraph~$G$ we define its \emph{combinatorial closure}~${\overline{X} \subseteq V(G) \cup \Omega(G)}$ as the set~$X$ together with all ends of~$G$ that contain a ray which we cannot separate from~$X$ by finitely many vertices. 
Note that for a finite cut~${E(X, Y)}$ of~$G$ we obtain that~${(\overline{X}, \overline{Y})}$ is a bipartition of~${V(G) \cup \Omega(G)}$, 
because every end in~$\overline{X}$ can be separated from~$Y$ by the finitely many vertices of~$X$ that are incident with edges of~${E(X, Y)}$, 
and, furthermore, each ray contains a subray that is either completely contained in~$X$ or in~$Y$ since~${E(X, Y)}$ is finite. 
The \emph{combinatorial closure} of a vertex set in a digraph is just defined as the combinatorial closure of that set in the underlying undirected multigraph.

Let~$G$ be a locally finite digraph and~${Z \subseteq V(G) \setminus \lbrace r \rbrace}$ with~${r \in V(G) \cup \Omega(G)}$. 
An edge set~${F \subseteq E(G)}$ is called \emph{$r$-reachable for~$Z$} if~${\abs{F \cap \overrightarrow{E}(X, Y)} \geq 1}$ holds for every finite cut~${E(X, Y)}$ of~$G$ with~${r \in \overline{X}}$ and~${Y \cap Z \neq \emptyset}$.
Furthermore, if~$F$ is an $r$-reachable set for~${V(G) \setminus \lbrace r \rbrace}$, we call~$F$ a \emph{spanning $r$-reachable set}. 
Note that a spanning $r$-reachable set spans~${V(G)}$ as an edge set. 
We continue with a very basic remark about spanning $r$-reachable sets.

\begin{remark}
    \label{rem:basic_rem}
    Let~$G$ be a locally finite digraph with a spanning $r$-reachable set~$F$ with~${r \in V(G) \cup \Omega(G)}$. 
    Then ${\abs{F \cap \overrightarrow{E}(V(G) \setminus M, M)} \geq 1}$ holds for every non-empty finite set~${M \subseteq V(G)}$ with~${r \notin M}$.
\end{remark}

\begin{proof}
    Since~$G$ is locally finite and~$M$ is finite, we know that the cut~${E(V(G) \setminus M, M)}$ is finite. 
    The assumption~${r \notin M}$ ensures that~${r \in \overline{V(G) \setminus M}}$. 
    Using that~$F$ is a spanning \linebreak $r$-reachable set and that~$M$, as a non-empty set, contains a vertex different from~$r$, we get the desired inequality~${\abs{F \cap \overrightarrow{E}(V(G) \setminus M, M)} \geq 1}$ by the definition of spanning $r$-reachable sets. 
\end{proof}

Note that for a locally finite digraph~$G$ with a spanning $r$-reachable set~$F$ the digraph~${G[F]}$ is spanning. 
This follows by applying Remark~\ref{rem:basic_rem} to the set~${M := \lbrace v \rbrace}$ for every vertex~${v \in V(G)}$. 
Furthermore, note that if~$G$ is finite, the subgraph induced by a spanning $r$-reachable set contains a spanning arborescence rooted in~${r \in V(G)}$. 

We conclude this section with a last definition. 
We call an inclusion-wise minimal \linebreak $r$-reachable set~$F$ for a set~${Z \subseteq V(G) \setminus \lbrace r \rbrace}$ a \emph{pseudo-arborescence for~$Z$ rooted in~$r$}. 
Moreover, if~$F$ is spanning, i.e.~${Z = V(G) \setminus \lbrace r \rbrace}$, we call it a \emph{spanning pseudo-arborescence rooted in~$r$}.

\subsection{Topological notions for undirected multigraphs}

Throughout this subsection let~${G = (V, E)}$ denote a locally finite connected multigraph. 
We can endow~$G$ together with its ends with a topology which yields the topological space~$\abs{G}$. 
A precise definition of~$\abs{G}$ for locally finite connected simple graphs can be found in \cite{diestel_buch}*{Chapter~8.5}. 
However, this concept and definition directly extends to locally finite connected multigraphs.  
For a better understanding we should point out here that a ray of~$G$ converges in~$\abs{G}$ to the end of~$G$ that it is contained in. 
An equivalent way of describing~$\abs{G}$ is by first endowing~$G$ with the topology of a $1$-complex and then compactifying this space using the Freudenthal compactification~\cite{freud-equi}. 

For an edge~${e \in E}$ let~$\mathring{e}$ denote the set of points in~$\abs{G}$ that correspond to inner points of the edge~$e$. 
For an edge set~${F \subseteq E}$ we define~${\mathring{F} = \bigcup \lbrace \mathring{e} \; ; \; e \in F \rbrace \subseteq \abs{G}}$. 
Given a point set~$X$ in~$\abs{G}$, we denote the \emph{closure} of~$X$ in~$\abs{G}$ by~$\overline{X}$. 
To ease notation we shall also use this notation when~$X$ denotes an edge set or a subgraph of~$G$, meaning that we apply the closure operator to the set of all points in~$\abs{G}$ that correspond to~$X$. 
Note that for a vertex set its closure coincides with its combinatorial closure in locally finite connected multigraphs. 
Hence, we shall use the same notation for these two operators. 
Furthermore we call a subspace~${Z \subseteq \abs{G}}$ \emph{standard} if~${Z = \overline{H}}$ for some subgraph~$H$ of~$G$. 

Let~${W \subseteq \abs{G}}$ and~$<_W$ be a linear order on~${\mathring{E} \cap W}$. 
We call the tuple~${(W, <_W)}$ a \emph{topological walk} in~$\abs{G}$ if there exists a continuous map~${\sigma : [0,1] \longrightarrow |G|}$ such that the following hold:

\begin{enumerate}
    \item $W$ is the image of~$\sigma$, 
    \item each point~${p \in \mathring{E} \cap W}$ has precisely one preimage under~$\sigma$, and
    \item the linear order~$<_W$ equals the linear order~$<_{\sigma}$ on~${\mathring{E} \cap W}$ 
        defined via ${p <_{\sigma} q}$ if and only if ${\sigma^{-1}(p) <_{\mathbb{R}} \sigma^{-1}(q)}$, where~$<_{\mathbb{R}}$ denotes the natural linear order of the reals. 
\end{enumerate}

We call such a map~$\sigma$ a \emph{witness} of~${(W, <_W)}$. 
When we talk about a topological walk~${(W, <_W)}$ we shall often omit stating its linear order~$<_W$ explicitly and just refer to the topological walk by writing~$W$. 
In particular, we might say that a topological walk~${(W, <_W)}$ is contained in some subspace~$X$ of~$\abs{G}$ if~${W \subseteq X}$ holds. 
Furthermore, we call a point~$x$ of~$\abs{G}$ an \emph{endpoint} of~$W$ if~$0$ or~$1$ is mapped to~$x$ by a witness of~$W$.  
Note that this definition is independent of the particular witness. 
Similar to finite walks in graphs we call an endpoint~$x$ of~$W$ an \emph{endvertex} of~$W$ if~$x$ corresponds to a vertex of~$G$. 
Furthermore, we denote~$W$ as an $x$--$y$ topological walk, if~$x$ and~$y$ are endpoints of~$W$.
If~$W$ has just one endpoint, which then has to be an end or a vertex by definition, we call it \emph{closed}. 
Note that an $x$--$y$ topological walk is a standard subspace for any~${x, y \in V \cup \Omega(G)}$.
We say that a witness~$\sigma$ of a topological walk~$W$ \emph{pauses} at a vertex~${v \in V}$ if the preimage of~$v$ under~$\sigma$ is a disjoint union of closed nontrivial intervals. 

We define an \emph{arc} in~$\abs{G}$ as the image of a homeomorphism mapping into~$\abs{G}$ and with the closed real unit interval~${[0, 1] \subseteq \mathbb{R}}$ as its domain. 
Note that arcs in~$\abs{G}$ are also topological walks in~$\abs{G}$ if we equip them with a suitable linear order, of which there exist only two. 
Since the choice of such a linear order does not change the set of endpoints of the arc if we then consider it as a topological walk, we shall use the notion of endpoints and endvertices also for arcs. 
Furthermore, note that finite paths of~$G$ which contain at least one edge correspond to arcs in~$\abs{G}$, but again there might be infinite subgraphs, for example rays, whose closures form arcs in~$\abs{G}$. 
We now call a subspace~$X$ of~$\abs{G}$ \emph{arc-connected} if there exists an $x$--$y$~arc in~$X$ for any two points~${x, y \in X}$. 

Lastly, we define a \emph{circle} in~$\abs{G}$ as the image of a homeomorphism mapping into~$\abs{G}$ and with the unit circle~${S^1 \subseteq \mathbb{R}^2}$ as its domain. 
We might also consider any circle as a closed topological walk if we equip it with a suitable linear order, which, however, depends on the point on the circle that we choose as the endpoint for the closed topological walk, and on choosing one of the two possible orientations of~$S^1$. 
Similarly as for finite paths, note that finite cycles in~$G$ correspond to circles in~$\abs{G}$, but there might be infinite subgraphs of~$G$ whose closures are circles in~$\abs{G}$ as well. 

Using these definitions we can now formulate a topological extension of the notion of trees. 
We define a \emph{topological tree} in~$\abs{G}$ as an arc-connected standard subspace of~$\abs{G}$ that does not contain any circle. 
Note that in a topological tree there is a unique arc between any two points of the topological tree, which resembles a property of finite trees with respect to vertices and finite paths. 
Furthermore, we denote by a \emph{topological spanning tree of~$G$} a topological tree in~$\abs{G}$ that contains all vertices of~$G$. 
Since topological spanning trees are closed subspaces of~$\abs{G}$, they need to contain all ends of~$G$ as well.

\subsection{Topological notions for digraphs}

In this subsection we extend some of the notions of the previous subsection to directed graphs. 
Throughout this subsection let~$G$ denote a locally finite weakly connected digraph and let~$H$ denote its underlying multigraph. 
We define the topological space~$\abs{G}$ as~$\abs{H}$. 
Additionally, every edge~${e = uv \in E(G)}$ defines a certain linear order~$<_e$ on ${\overline{\lbrace e \rbrace} \subseteq \abs{G}}$ via its direction. 
For the definition of~$<_e$ we first take any homeomorphism ${\varphi_e: [0, 1] \longrightarrow \overline{\lbrace e \rbrace} \subseteq \abs{G}}$ with~${\varphi_e(0) = u}$ and~${\varphi_e(1) = v}$. 
Now we set~${p <_e q}$ for arbitrary~${p, q \in \overline{\lbrace e \rbrace}}$ if ${\varphi_e^{-1}(p) <_{\mathbb{R}} \varphi_e^{-1}(q)}$ where~${<_{\mathbb{R}}}$ is the natural linear order on the real numbers. 
Note that the definition of~$<_e$ does not depend on the choice of the homeomorphism~$\varphi_e$. 

Let~${(W, <_W)}$ be a topological walk in~$\abs{G}$ with witness~$\sigma$. 
We call~${(W, <_W)}$ \emph{directed} if~${<_e {\upharpoonright} \mathring{e}}$ equals~${<_W {\upharpoonright} \mathring{e}}$ for every edge~${e \in E(G)}$ with~${\mathring{e} \cap W \neq \emptyset}$. 
If~${(W, <_W)}$ is directed and~${\sigma(0) = s \neq t = \sigma (1)}$ for~${s, t \in \abs{G}}$, then there is no linear order~$<'_W$ such that~${(W, <'_W)}$ is a directed topological walk with a witness~$\sigma'$ satisfying~${\sigma'(0) = t}$ and~${\sigma'(1) = s}$, because every topological $s$--$t$~walk uses inner points of some edge. 
Hence, if we consider a directed topological $s$--$t$~walk~${(W, <_W)}$ for~${s, t \in \abs{G}}$, we implicitly assume that~${\sigma(0) = s \neq t = \sigma (1)}$ holds for every witness~$\sigma$ of~${(W, <_W)}$. 

As arcs and circles can be seen as special instances of topological walks, \emph{directed arcs} and \emph{directed circles} are analogously defined. 
Note that if we can equip an arc with a suitable linear order such that it becomes a directed topological walk, then this linear order is unique. 
Hence, when we call an arc directed we implicitly associate this unique linear order with it.

\subsection{Basic lemmas}

The proofs of two lemmas (Lemma~\ref{lem:equi_lemma_walk} and Lemma~\ref{lem:k_many-reachable}) rely to some extent on compactness arguments. 
At those points it will be sufficient for us to use the following lemma, which is known as K\"onig's Infinity Lemma. 

\begin{lemma}
    \label{lem:koenig}
    \cite{diestel_buch}*{Lemma~8.1.2}
    Let ${(V_i)_{i \in \mathbb{N}}}$ be a sequence of disjoint non-empty finite sets, and let~$G$ be a graph on their union. 
    Assume that for every~${n > 0}$ each vertex in~$V_n$ has a neighbour in~$V_{n-1}$. 
    Then~$G$ contains a ray ${v_0v_1 \ldots}$ with~${v_n \in V_n}$ for all~${n \in \mathbb{N}}$. 
\end{lemma}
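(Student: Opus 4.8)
The plan is to construct the desired ray $v_0 v_1 v_2 \ldots$ greedily, level by level, maintaining a single invariant strong enough that the construction can never get stuck. The natural invariant is that the vertex $v_n$ chosen in $V_n$ lies below infinitely many vertices of the graph via \emph{descending paths}. To make this precise, for a vertex $v \in V_n$ call a descending path to $v$ any path $w = u_m, u_{m-1}, \ldots, u_n = v$ with $u_i \in V_i$ and consecutive vertices adjacent, and write $T(v)$ for the set of all vertices $w$ (in any level $V_m$ with $m \geq n$) that admit such a descending path to $v$. Note $v \in T(v)$ via the trivial path, and $T(v) \cap V_n = \{v\}$. The entire argument reduces to choosing, at each level, a vertex $v_n$ with $T(v_n)$ infinite.

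I would first establish the base case. Iterating the neighbour hypothesis, every vertex $w \in V_m$ has a neighbour in $V_{m-1}$, and hence a descending path running all the way down to some vertex of $V_0$. Consequently $\bigcup_{v \in V_0} T(v) = \bigcup_{m} V_m$, which is infinite. Since $V_0$ is finite, the pigeonhole principle yields a vertex $v_0 \in V_0$ with $T(v_0)$ infinite.

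For the inductive step, suppose $v_n \in V_n$ has been chosen with $T(v_n)$ infinite. Because $V_{n+1}$ is finite, $v_n$ has only finitely many neighbours $w_1, \ldots, w_j$ in $V_{n+1}$. Every $w \in T(v_n)$ lying in a level above $n$ has a descending path to $v_n$ whose penultimate vertex lies in $V_{n+1}$ and is therefore adjacent to $v_n$, i.e.\ equals some $w_i$; truncating that path at $w_i$ witnesses $w \in T(w_i)$. Hence $T(v_n) \setminus \{v_n\} \subseteq \bigcup_{i=1}^{j} T(w_i)$, so at least one $T(w_i)$ is infinite. Setting $v_{n+1} := w_i$ produces a neighbour of $v_n$ in $V_{n+1}$ with $T(v_{n+1})$ infinite, preserving the invariant.

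Running this recursion through all $n$ yields a sequence $v_0 v_1 v_2 \ldots$ with $v_n \in V_n$ and $v_{n-1}$ adjacent to $v_n$ for every $n > 0$; since the $V_n$ are pairwise disjoint the vertices are automatically distinct, so this is exactly the required ray. The only genuinely delicate point is the pigeonhole step in the induction: one must verify that the infinitely many descending paths counted by $T(v_n)$ really do funnel through the \emph{finitely many} neighbours of $v_n$ in the next level. This is precisely where the finiteness of each $V_{n+1}$ is indispensable — without it the covering $T(v_n) \setminus \{v_n\} \subseteq \bigcup_i T(w_i)$ could spread an infinite set over infinitely many pieces and fail to force any single $T(w_i)$ infinite. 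Everything else is routine bookkeeping.
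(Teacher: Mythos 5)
Your proposal is correct, and it is essentially the same argument as the paper's: the paper gives no proof of its own but cites \cite{diestel_buch}*{Lemma~8.1.2}, whose proof is precisely this pigeonhole induction maintaining at each level a vertex joined by descending paths to infinitely many vertices above it. Your covering step ${T(v_n)\setminus\{v_n\}\subseteq\bigcup_{i=1}^{j}T(w_i)}$ and the use of the finiteness of each~$V_{n+1}$ are exactly the key points of that standard proof, so nothing is missing.
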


We shall heavily work with the topological space~$\abs{G}$ of a locally finite multigraph~$G$ appearing as the underlying graph of digraphs we consider. 
Therefore, we shall make use of some basic statements and properties of the space~$\abs{G}$, in particular those involving connectivity. 
Although the following lemmas are only stated for locally finite graphs, their proofs immediately extend to locally finite multigraphs. 

\begin{proposition}
    \label{prop:compact_Hausdorff}
    \cite{diestel_buch}*{Lemma~8.5.1}
    If~$G$ is a locally finite connected multigraph, then~$\abs{G}$ is a compact Hausdorff space. 
\end{proposition}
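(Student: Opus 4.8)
The plan is to verify the two properties separately. Throughout I recall that a neighbourhood basis at an end $\omega \in \Omega(G)$ is given by the \emph{basic open sets} $\hat{C}(S,\omega)$, where $S \subseteq V(G)$ is finite, $C$ is the component of $G - S$ in which $\omega$ lives, and $\hat{C}(S,\omega)$ consists of $C$, all ends of $G$ living in $C$, and the open half-edges $\mathring{e}$ of the (finitely many) edges $e$ between $S$ and $C$; neighbourhoods of vertices and of inner edge-points are as in the $1$-complex topology. For the Hausdorff property I would separate two distinct points $x,y$ of $\abs{G}$ according to their types. If both are inner edge-points, or one is an inner edge-point and the other a vertex, the $1$-complex topology already separates them. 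If both are ends $\omega_1 \neq \omega_2$, then by definition they are separated by some finite vertex set $S$; the components $C_1, C_2$ of $G - S$ in which they live are distinct, so $\hat{C}(S,\omega_1)$ and $\hat{C}(S,\omega_2)$ are disjoint open neighbourhoods. If $x$ is an end and $y$ a vertex or inner edge-point, I enlarge a separating set to a finite $S$ whose region $\hat{C}(S,x)$ avoids $y$ (possible since $y$ meets only finitely many edges and $G$ is locally finite). Hence $\abs{G}$ is Hausdorff.

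Compactness is the heart of the proof, and here I would use König's Infinity Lemma (Lemma~\ref{lem:koenig}). Fix a vertex $r$ and let $S_n$ be the set of vertices at distance at most $n$ from $r$, which is finite by local finiteness; then $S_0 \subseteq S_1 \subseteq \cdots$ and $\bigcup_n S_n = V(G)$ by connectedness. For each $n$ let $\mathcal{C}_n$ be the set of components of $G - S_n$; since $G$ is connected and only finitely many edges leave the finite set $S_n$, the set $\mathcal{C}_n$ is finite, and $\abs{G}$ is the union of a compact ``core'' $K_n$ (consisting of $S_n$, the edges inside $S_n$, and initial segments of the edges leaving $S_n$) together with the regions $\hat{C}(S_n,C)$ for $C \in \mathcal{C}_n$. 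Now let $\mathcal{O}$ be any open cover; after passing to a refinement I may assume its members are basic open sets, and I argue by contradiction that it admits a finite subcover. Call $C \in \mathcal{C}_n$ \emph{bad} if its region is not covered by finitely many members of $\mathcal{O}$. Each finite component is good, since its region is compact, so bad components are infinite; and as $K_n$ is compact and $\mathcal{C}_n$ finite, the absence of a finite subcover forces some component at each level to be bad. Since $S_n \subseteq S_{n+1}$, every member of $\mathcal{C}_{n+1}$ refines a member of $\mathcal{C}_n$, so each bad component has a bad parent.

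Applying Lemma~\ref{lem:koenig} to the levels of bad components yields a nested sequence $C_0 \supseteq C_1 \supseteq \cdots$ of bad (hence infinite) components with $C_n \in \mathcal{C}_n$. Because these are nested, connected, infinite and the $S_n$ exhaust $V(G)$, they determine an end $\omega \in \Omega(G)$ living in every $C_n$; a ray witnessing $\omega$ can be extracted by a further application of König's Lemma. Some $O \in \mathcal{O}$ contains $\omega$, and being basic it contains a neighbourhood $\hat{C}(S,\omega)$. For $n$ large enough that $S \subseteq S_n$, the component $C_n$ is contained in the component of $G - S$ in which $\omega$ lives, whence the region of $C_n$ satisfies $\hat{C}(S_n,C_n) \subseteq \hat{C}(S,\omega) \subseteq O$. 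But then $C_n$ is covered by the single set $O$, contradicting its badness. Therefore a finite subcover exists and $\abs{G}$ is compact.

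The main obstacle I anticipate is this compactness step, and within it the passage from the nested sequence of bad components to a genuine end of $G$: one must check that such a nested sequence indeed names a point of $\abs{G}$ whose basic neighbourhoods eventually sit inside the regions $\hat{C}(S_n,C_n)$, which is exactly the compatibility built into the definition of $\abs{G}$ and its ends. By contrast, the Hausdorff property and the finiteness of each $\mathcal{C}_n$ are comparatively routine consequences of local finiteness and connectedness.
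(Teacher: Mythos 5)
Your proof is correct; the paper offers no proof of its own here but cites Diestel's Lemma~8.5.1, whose standard argument is exactly the one you give --- the case analysis with basic open sets for the Hausdorff property, and K\"onig's Infinity Lemma applied to nested ``bad'' components of ${G - S_n}$ to produce an end whose basic neighbourhood contradicts badness. The only cosmetic slip is that the region of a finite component is open rather than compact (it is its \emph{closure} that is compact, which is what yields finite coverability), but this does not affect the argument.
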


The next lemma is essential for decoding the topological property of arc-connectedness of standard subspaces of~$\abs{G}$ into a combinatorial one. 

\begin{lemma}
    \label{lem:jumping-arc}
    \cite{diestel_buch}*{Lemma~8.5.3}
    Let~$G$ be a locally finite connected multigraph and ${F \subseteq E(G)}$ be a cut with sides~$V_1$ and~$V_2$. 
    \begin{enumerate}
        [label=(\roman*)]
        \item {If~$F$ is finite, then~${\overline{V_1} \cap \overline{V_2} = \emptyset}$, and there is no arc in~${\abs{G} \setminus \mathring{F}}$ with one endpoint in~$V_1$ and the other in~$V_2$.} 
        \item {If~$F$ is infinite, then~${\overline{V_1} \cap \overline{V_2} \neq \emptyset}$, and there may be such an arc.} 
    \end{enumerate}
\end{lemma}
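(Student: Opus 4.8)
The plan is to treat the two parts separately, in each case reducing the topological claim to a combinatorial separation statement about the finitely many vertices incident to $F$.

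For part~(i), assume $F$ is finite and let $S$ be the finite set of endvertices of the edges in $F$. First I would record that every ray has a tail lying entirely in $V_1$ or entirely in $V_2$, since $F$ is finite and each crossing from one side to the other uses an edge of $F$; consequently each end of $G$ lies in exactly one of the combinatorial closures $\overline{V_1}$, $\overline{V_2}$, which together with $V_1\cap V_2=\emptyset$ yields $\overline{V_1}\cap\overline{V_2}=\emptyset$. For the arc statement I would exhibit $\abs{G}\setminus\mathring{F}$ as a disjoint union of two relatively clopen pieces, namely the standard subspaces $Z_i:=\overline{G[V_i]}$ for $i\in\{1,2\}$. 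Three things have to be checked: (a) $Z_1\cup Z_2=\abs{G}\setminus\mathring{F}$, which holds because every point that is not an inner point of a cut edge is a vertex, an inner point of a non-cut edge, or an end, and each such point lies in some $Z_i$ (for ends this uses the ray observation above); (b) each $Z_i$ avoids $\mathring{F}$, since no inner point of a cut edge is a limit of the point set of $G[V_i]$; and (c) $Z_1\cap Z_2=\emptyset$, the only nontrivial case being ends, which is where $S$ enters. An end lying in both closures would have equivalent tails in $V_1\setminus S$ and in $V_2\setminus S$, which lie in different components of $G-S$ and are therefore separated by the finite set $S$, a contradiction. Since an arc is connected and $Z_1,Z_2$ are clopen in $\abs{G}\setminus\mathring{F}$ with $V_1\subseteq Z_1$ and $V_2\subseteq Z_2$, no arc can join $V_1$ to $V_2$ in $\abs{G}\setminus\mathring{F}$.

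For part~(ii), assume $F$ is infinite. To produce a point of $\overline{V_1}\cap\overline{V_2}$ I would use that $\abs{G}$ is compact by Proposition~\ref{prop:compact_Hausdorff}: the set $M$ of midpoints of the edges in $F$ is infinite and therefore has a limit point $\omega$. Local finiteness forces $\omega$ to be an end, since every vertex and every inner edge point has a neighbourhood meeting only finitely many edges and hence only finitely many points of $M$. Then any basic open neighbourhood of $\omega$ contains the midpoints of infinitely many edges of $F$, and all but finitely many of these edges lie inside it together with both endvertices; as these endvertices meet both $V_1$ and $V_2$, the neighbourhood meets both sides, so $\omega\in\overline{V_1}\cap\overline{V_2}$.

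The remaining assertion of part~(ii), that such a crossing arc may nevertheless exist, is settled by an example, for which I would take the one-ended ladder with rails $a_0a_1\cdots$ and $b_0b_1\cdots$, rungs $a_ib_i$, and the infinite cut $F=\{a_ib_i\}$ separating $V_1=\{a_i\}$ from $V_2=\{b_i\}$. The closures of the two rails meet in the unique end $\omega$, so their union is an $a_0$--$b_0$ arc through $\omega$ that uses no rung and hence lies in $\abs{G}\setminus\mathring{F}$. The main obstacle is step~(c) of part~(i): getting the disjointness of $Z_1$ and $Z_2$ right at the ends, that is, turning the finiteness of $F$ into a finite separator $S$ and arguing that no single end can be approached from both sides once $S$ is deleted. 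Everything else is bookkeeping about which points of $\abs{G}\setminus\mathring{F}$ fall into which standard subspace.
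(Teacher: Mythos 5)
The paper does not prove this lemma: it is quoted from Diestel's book (Lemma~8.5.3), with only the remark that the proof extends verbatim from simple graphs to multigraphs. Your argument is a correct reconstruction of that standard proof --- the clopen decomposition of ${\abs{G} \setminus \mathring{F}}$ into~$\overline{G[V_1]}$ and~$\overline{G[V_2]}$ via the finite separator~$S$ for part~(i), the accumulation point of the midpoints of the edges of~$F$ for part~(ii), and the infinite ladder as the witnessing example are exactly the textbook route.
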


Note that for a finite cut~${E(X, Y)}$ of~$G$ we obtain that~${(\overline{X}, \overline{Y})}$ bipartitions~${V(G) \cup \Omega(G)}$.

The following lemma captures the equivalence of arc-connectedness and connectedness for standard subspaces of~$\abs{G}$. 

\begin{lemma}
    \label{lem:equi_con_arc}
    \cite{diestel_buch}*{Lemma~8.5.4}
    If~$G$ is a locally finite connected multigraph, then every connected standard subspace of~$\abs{G}$ is arc-connected.
\end{lemma}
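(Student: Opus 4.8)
The plan is to show that any two points of a connected standard subspace $Z=\overline{H}$ (with $H\subseteq G$ a subgraph) are joined by an arc in $Z$, the bulk of the work being the case of two vertices. First I would reduce the point types. The closure of a single edge of $H$ is an arc between its two endvertices, so every interior point of an edge of $H$ lies on an arc in $Z$ ending at a vertex of $H$; and since a concatenation of two arcs sharing an endpoint contains an arc between the far endpoints, it suffices to join any two points of $Z$ that are vertices or ends. The case of two vertices $u,v\in V(H)$ is treated below, and reaching an end is handled by the very same limiting construction, with the target being an end rather than a vertex (the constructed $H$-ray then converging to it).

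The combinatorial heart is the following separation property: $E(H)$ meets every finite cut $E(X,Y)$ of $G$ with $u\in X$ and $v\in Y$. I would prove this by contraposition using connectedness. Suppose some such finite cut $F=E(X,Y)$ satisfied $F\cap E(H)=\emptyset$. Then no interior point of a cut edge lies in $\overline{H}$, i.e.\ $\mathring{F}\cap Z=\emptyset$, so $Z\subseteq\abs{G}\setminus\mathring{F}$. By Lemma~\ref{lem:jumping-arc}(i) the closures $\overline{X}$ and $\overline{Y}$ are disjoint; since removing a finite cut deletes only the interiors of its edges, and every non-cut edge has both endpoints on one side, $\overline{X}$ and $\overline{Y}$ are disjoint closed sets whose union is exactly $\abs{G}\setminus\mathring{F}$, so each is relatively clopen there. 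Consequently $Z=(Z\cap\overline{X})\,\sqcup\,(Z\cap\overline{Y})$ would be a partition of $Z$ into two relatively clopen sets, both non-empty because $u\in\overline{X}\cap Z$ and $v\in\overline{Y}\cap Z$, contradicting the connectedness of $Z$.

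With the separation property in hand, the remaining and decisive step is to manufacture a genuine $u$--$v$ arc inside $Z$ from it. Here I would exhaust $V(G)$ by an increasing sequence of finite separators $S_0\subseteq S_1\subseteq\cdots$ with union $V(G)$ (possible since a locally finite connected multigraph is countable), and record, at each level $n$, the regions of $\abs{G}$ cut out around $S_n$ through which a prospective $u$--$v$ connection in $Z$ must pass. The separation property guarantees that every region genuinely separating $u$ from $v$ is crossed by an edge of $H$, so these data organise into an infinite, finitely branching tree; applying König's Infinity Lemma (Lemma~\ref{lem:koenig}) yields a coherent limiting sequence of finite $H$-paths which, together with compactness of $\abs{G}$ (Proposition~\ref{prop:compact_Hausdorff}), assembles into a topological $u$--$v$ walk in $Z$, and pruning repetitions turns this walk into an arc.

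I expect this last step to be the main obstacle. The difficulty is precisely that a connected standard subspace need not be locally connected, so one cannot simply invoke the classical fact that a Peano continuum is arc-connected; the standardness of $Z$ must be exploited directly. Concretely, the object produced by the compactness argument is a priori only a continuous image of $[0,1]$ that may traverse points repeatedly and may have to pass through infinitely many ends of $G$, and the delicate part is to reparametrise and prune it into an injective map, that is, a true arc, while never leaving $Z$. Controlling this passage through ends, and verifying that the pruned limit remains contained in the closed set $Z$, is where the real care is required.
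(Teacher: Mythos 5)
You should first note that the paper does not prove this lemma at all: it is imported verbatim from Diestel's book (Lemma~8.5.4 there), so there is no in-paper proof to compare against. The closest internal analogue is the machinery of Section~\ref{sec:top-dir}, where the implication \ref{item:walk2}$\Rightarrow$\ref{item:walk1} of Lemma~\ref{lem:equi_lemma_walk} together with Corollary~\ref{cor:arc_in_walk} carries out, for directed walks, exactly the blueprint you describe. Your first two steps are sound: the reduction to joining vertices and ends is harmless, and your cut condition is precisely (one direction of) Lemma~\ref{lem:equi_con_each_cut}, which you could simply have invoked; your clopen-partition argument via Lemma~\ref{lem:jumping-arc} is the standard proof of it (modulo a small imprecision: the two closed pieces partitioning $\abs{G}\setminus\mathring{F}$ are the closures of the two \emph{sides} including their internal edges, not just of the vertex sets).

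The genuine gap is that your third step --- the only step that actually produces an arc --- is a description of the difficulties rather than an argument. Two things are missing. First, the limit object: you must specify the finite approximations (walks, not paths, in the contraction minors $G_n$, since a path in $G_{n+1}$ need not project to a path in $G_n$), make the König-compatible choices, and then \emph{prove} that the inverse limit of witnesses $\varphi_n$ is a continuous map into $\abs{G}$; this requires the device of pausing at dummy vertices and identifying the points with no stabilising image as ends via $\bigcap_n\overline{C_n}=\{\omega\}$, exactly as in the proof of Lemma~\ref{lem:equi_lemma_walk}. Without this, "assembles into a topological $u$--$v$ walk" is an assertion, not a proof. Second, the walk-to-arc extraction: this is in fact not the hard part you fear, and you need not worry about local connectedness --- either quote the general-topology fact that in a (compact) Hausdorff space, guaranteed here by Proposition~\ref{prop:compact_Hausdorff}, the image of any path from $u$ to $v\neq u$ contains a $u$--$v$ arc, or run the Zorn's Lemma argument on the cut condition as in Corollary~\ref{cor:arc_in_walk}; either way the arc stays inside the closed set $Z$ because it is a subset of the walk's image. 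As submitted, then, the proposal is a correct strategy with the decisive construction left unexecuted; filling it in amounts to transcribing the undirected version of Lemma~\ref{lem:equi_lemma_walk}'s proof.
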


We conclude with a convenient lemma which combines the essences of the previous two. 

\begin{lemma}
    \label{lem:equi_con_each_cut}
    \cite{diestel_buch}*{Lemma~8.5.5}
    If~$G$ is a locally finite connected multigraph, then a standard subspace of~$\abs{G}$ is connected if and only if it contains an edge from every finite cut of~$G$ of which it meets both sides. 
\end{lemma}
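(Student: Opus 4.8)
The plan is to write a standard subspace as $Z = \overline{H}$ for a subgraph $H \subseteq G$, to record that $Z$ is closed in the compact space $\abs{G}$ (Proposition~\ref{prop:compact_Hausdorff}) and hence itself compact, and then to treat the two implications separately. Throughout I use that, since $Z$ is standard, an inner point of an edge $e$ lies in $Z$ if and only if $e \in E(H)$; so ``$Z$ contains an edge of the cut $F$'' and ``$Z$ meets $\mathring{F}$'' mean the same thing.

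For the forward implication, assume $Z$ is connected; by Lemma~\ref{lem:equi_con_arc} it is arc-connected. Let $F = E(X,Y)$ be a finite cut of which $Z$ meets both sides, and pick $a \in Z \cap \overline{X}$ and $b \in Z \cap \overline{Y}$ together with an $a$--$b$ arc $A \subseteq Z$. Since $F$ is finite, Lemma~\ref{lem:jumping-arc}(i) shows that $\abs{G} \setminus \mathring{F}$ admits no arc joining $\overline{X}$ to $\overline{Y}$: the disjoint closed sets $\overline{X}$ and $\overline{Y}$ extend to a separation of $\abs{G} \setminus \mathring{F}$ by adjoining the inner points of the edges inside $X$, respectively inside $Y$. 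As $A$ is connected with one endpoint in $\overline{X}$ and the other in $\overline{Y}$, it cannot avoid $\mathring{F}$, so it contains an inner point of some cut edge $e$; then $e \in E(H)$ and $\overline{e} \subseteq Z$, i.e.\ $Z$ contains an edge of $F$.

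For the converse I argue by contraposition. Suppose $Z$ is disconnected, say $Z = Z_1 \cup Z_2$ with $Z_1, Z_2$ nonempty, disjoint and relatively clopen, hence compact. First note that no edge of $H$ runs between $Z_1$ and $Z_2$, since the closure of such an edge would be a connected subset of $Z$ and must lie in a single $Z_i$. The goal is then to manufacture a \emph{finite} cut $F = E(X,Y)$ with $Z \cap \overline{X} \neq \emptyset \neq Z \cap \overline{Y}$ and $F \cap E(H) = \emptyset$: such a cut meets both sides of $Z$ yet contains no edge of $Z$, contradicting the hypothesis. To build it I would use normality of the compact Hausdorff space $\abs{G}$ to separate $Z_1, Z_2$ by disjoint open sets $O_1, O_2$, cover the compact set $Z$ by finitely many basic open neighbourhoods each contained in one $O_i$ (open stars around vertices, and neighbourhoods of ends determined by a finite vertex separator), and let $S$ be the finite union of all vertices and separators occurring. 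Local finiteness of $G$ ensures that $G - S$ has only finitely many components, each meeting $S$; one checks that the closure of each such component meets $Z$ in at most one of $O_1, O_2$. Assigning each component, and each vertex of $S$, to the side prescribed by the $O_i$ it meets then yields the desired partition $(X,Y)$, whose cut is finite (its edges are incident with $S$), avoids $E(H)$ (by the assignment together with the absence of $H$-edges across $Z_1, Z_2$), and has $Z$ in both $\overline{X}$ and $\overline{Y}$.

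The main obstacle is exactly this last construction: turning the purely topological separation $Z = Z_1 \cup Z_2$ into a \emph{finite} combinatorial cut. The compactness of $Z$ (via Proposition~\ref{prop:compact_Hausdorff}) is what reduces the infinitely many ends and vertices to a single finite separator $S$, and local finiteness is what keeps the resulting cut finite; verifying that the closures of the components of $G - S$ do not straddle $O_1$ and $O_2$, and that consequently no edge of $H$ is split by the cut, is the technical heart of the argument.
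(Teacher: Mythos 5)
The paper does not prove this lemma itself but imports it as Lemma~8.5.5 of Diestel's book, and your argument is essentially the standard proof of that result: arc-connectedness (Lemma~\ref{lem:equi_con_arc}) plus the jumping-arc lemma for the forward direction, and for the converse a compactness argument turning a clopen partition of the standard subspace into a finite cut of $G$ avoiding its edges. Both halves are correct, and the step you flag as the technical heart --- that after extracting the finite separator $S$ no component of $G-S$ has closure meeting both $Z_1$ and $Z_2$, so the cut can be chosen to miss $E(H)$ --- does go through as you describe.
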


\section{Fundamental statements about topological directed walks in locally finite digraphs}
\label{sec:top-dir}

In this section we lift several facts about topological walks and arcs to their directed counterparts. 
Most of the involved techniques and proof ideas are similar to the ones used in undirected locally finite connected multigraphs. 
Nevertheless, because of the overlying directed structure on the multigraph, some adjustments and additional arguments are needed in the proofs. 
We start with a statement that combinatorially characterises the existence of directed topological walks in a standard subspace via finite cuts.

\begin{lemma}
    \label{lem:equi_lemma_walk}
    Let~$G$ be a locally finite weakly connected digraph,~${s, t \in V(G) \cup \Omega(G)}$ with~${s \neq t}$ and~${F \subseteq E(G)}$. 
    Then the following statements are equivalent. 
    \begin{enumerate}
        [label=(\roman*)]
        \item \label{item:walk1}
            $\overline{F}$ contains a directed topological $s$--$t$~walk.
        \item \label{item:walk2}
            ${\abs{F \cap \overrightarrow{E}(X, Y)} \geq 1}$ for every finite cut~${E(X, Y)}$ of~$G$ with~${s \in \overline{X}}$ and~${t \in \overline{Y}}$. 
        \item \label{item:walk3} 
            There is a subset ${W \subseteq F}$ such that 
            ${\abs{W \cap \overrightarrow{E}(X, Y)} = \abs{W \cap \overrightarrow{E}(Y, X)} + 1}$ for every finite cut~${E(X, Y)}$ of~$G$ with~${s \in \overline{X}}$ and~${t \in \overline{Y}}$. 
    \end{enumerate}
\end{lemma}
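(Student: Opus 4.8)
The plan is to prove the three statements equivalent by running the cycle \ref{item:walk1}~$\Rightarrow$~\ref{item:walk3}~$\Rightarrow$~\ref{item:walk2}~$\Rightarrow$~\ref{item:walk1}. The step \ref{item:walk3}~$\Rightarrow$~\ref{item:walk2} is immediate: if ${W \subseteq F}$ satisfies ${\abs{W \cap \overrightarrow{E}(X, Y)} = \abs{W \cap \overrightarrow{E}(Y, X)} + 1}$ for every finite cut ${E(X, Y)}$ with ${s \in \overline{X}}$ and ${t \in \overline{Y}}$, then in particular ${\abs{F \cap \overrightarrow{E}(X, Y)} \geq \abs{W \cap \overrightarrow{E}(X, Y)} \geq 1}$. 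Hence the substance lies in the remaining two implications, and I expect \ref{item:walk2}~$\Rightarrow$~\ref{item:walk1} to be the real work.

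For \ref{item:walk1}~$\Rightarrow$~\ref{item:walk3}, let ${\sigma\colon [0,1] \to \abs{G}}$ be a witness of a directed topological $s$--$t$~walk with point set ${U \subseteq \overline{F}}$, and set ${W := \lbrace e \in F : \mathring{e} \subseteq U \rbrace}$. Fix a finite cut ${E(X, Y)}$ with ${s \in \overline{X}}$ and ${t \in \overline{Y}}$ and write ${D := E(X, Y)}$ for its edge set. I would first argue that each edge of~$D$ lying in~$W$ is traversed exactly once, and, because the walk is directed, monotonically from its tail to its head; thus such an edge is crossed ``forwards'' precisely when it lies in~${\overrightarrow{E}(X, Y)}$ and ``backwards'' precisely when it lies in~${\overrightarrow{E}(Y, X)}$. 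Since~$D$ is finite, there are only finitely many such crossings, and by Lemma~\ref{lem:jumping-arc}\ref{item:walk1} (together with the remark that~${(\overline{X}, \overline{Y})}$ bipartitions~${V(G) \cup \Omega(G)}$) the set ${\abs{G} \setminus \mathring{D}}$ splits into the two clopen pieces carried by the $\overline{X}$-side and the $\overline{Y}$-side. Hence the ``which side'' map is locally constant on the complement in~$[0,1]$ of the finitely many crossing intervals, taking the $X$-value at~$0$ (as ${\sigma(0) = s \in \overline{X}}$) and the $Y$-value at~$1$. Counting the net number of side changes then yields exactly ${\abs{W \cap \overrightarrow{E}(X, Y)} - \abs{W \cap \overrightarrow{E}(Y, X)} = 1}$, which is~\ref{item:walk3}.

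For \ref{item:walk2}~$\Rightarrow$~\ref{item:walk1}, I would construct the walk by compactness. Fix an exhaustion of~$G$ by finite connected subgraphs~${G_0 \subseteq G_1 \subseteq \cdots}$ with ${\bigcup_n V(G_n) = V(G)}$, and for each~$n$ form the finite digraph~$G_n^{\ast}$ by contracting each weak component of~${G - V(G_n)}$ to a single vertex; when~$s$ or~$t$ is an end it is represented by the contracted vertex carrying its rays, and for large~$n$ these representatives are distinct. Consider the set~$R$ of vertices of~$G_n^{\ast}$ reachable from~$s$ along edges of~$F$. If the $t$-representative were not in~$R$, then~$R$ would induce a cut of~$G_n^{\ast}$, which pulls back to a finite cut~${E(X, Y)}$ of~$G$ with~${s \in \overline{X}}$, ${t \in \overline{Y}}$ and no edge of~$F$ directed from~$X$ to~$Y$, contradicting~\ref{item:walk2}. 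So there is a finite directed $F$-walk in~$G_n^{\ast}$ from~$s$ to the $t$-side, and un-contracting it gives a finite directed $F$-walk in~$G$ reaching into a component of~${G - V(G_n)}$ that contains a ray of~$t$. Local finiteness bounds the branching of the tree of initial segments of these walks, so König's Infinity Lemma (Lemma~\ref{lem:koenig}) extracts a limiting traversal, from which I would read off a witness of a directed topological $s$--$t$~walk contained in~$\overline{F}$.

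The hard part will be the final assembly step in \ref{item:walk2}~$\Rightarrow$~\ref{item:walk1}: turning the finite approximations into a genuine directed topological walk. I expect three points to require care. First, when~$s$ or~$t$ is an end, the witness must send an endpoint of~$[0,1]$ to that end and approach it along the walk, so the finite walks must be steered to escape toward the correct end rather than loop back. Second, the limiting walk may traverse infinitely many edges accumulating at ends in its interior, so I must verify that the resulting map on~$[0,1]$ is continuous and traverses each edge interior at most once, as the definition of a topological walk demands. Third, I must reconcile the combinatorial limit with a single continuous monotone traversal respecting~${<_e}$ on every edge. I would control all three using compactness of~$\abs{G}$ (Proposition~\ref{prop:compact_Hausdorff}) and the separation properties of finite cuts (Lemma~\ref{lem:jumping-arc}), mirroring the undirected arguments behind Lemmas~\ref{lem:equi_con_arc} and~\ref{lem:equi_con_each_cut} while tracking edge orientations throughout.
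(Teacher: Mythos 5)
Your proposal follows essentially the same route as the paper: the same cycle of implications, a crossing-count over the finite cut for \ref{item:walk1}~$\Rightarrow$~\ref{item:walk3}, and contraction of the components of a finite exhaustion plus K\"onig's Infinity Lemma for \ref{item:walk2}~$\Rightarrow$~\ref{item:walk1} (the paper invokes the finite branching theorem where you argue reachability across the cut induced by the reachable set directly, a cosmetic difference, and it applies K\"onig to the finite sets of whole $s$--$t$~walks in each $G_n$ related by restriction rather than to a tree of initial segments, which is the cleaner way to guarantee compatibility). The one step you flag but do not execute --- assembling the compatible finite walks into a continuous witness --- is precisely where the paper works hardest: it chooses witnesses $\varphi_n$ of the walks in the contracted graphs that \emph{pause} at every dummy vertex, obtains $\varphi_{n+1}$ by splicing rescaled witnesses of the detour walks into the paused intervals, and sends each remaining point of $[0,1]$ to the unique end in $\bigcap_{n}\overline{C_n}$ for the nested contracted components $C_n$; the three concerns you list are exactly the ones this construction resolves, so your sketch is aligned with the paper but stops short of the proof's substantive content at that point.
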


\begin{proof}
    First we prove the implication from~\ref{item:walk1} to~\ref{item:walk3}. 
    Let~${E(X, Y)}$ be any finite cut of~$G$ with~${s \in \overline{X}}$ and~${t \in \overline{Y}}$. 
    Since~$\overline{F}$ contains a directed topological $s$--$t$~walk~${(\overline{W}, <_{\overline{W}})}$ for an edge set~${W \subseteq E(G)}$, we know that~${F \cap E(X, Y) \neq \emptyset}$ by Lemma~\ref{lem:equi_con_each_cut}. 
    Note furthermore that~${\overline{X} \cap \overline{Y} = \emptyset}$ by Lemma~\ref{lem:jumping-arc}. 
    As~$\overline{X}$ and~$\overline{Y}$ are closed and~$\abs{G}$ is compact by Proposition~\ref{prop:compact_Hausdorff}, we get that~$\overline{X}$ and~$\overline{Y}$ are compact too. 
    Now let~$\varphi$ be a witness of~$\overline{W}$. 
    Since~$\overline{Y}$ is compact and~$\varphi$ is continuous, there exists a smallest number~${q \in [0, 1]}$ such that~${\varphi(q) \in \overline{Y}}$. 
    Furthermore, there is a biggest number~${p \in [0, q]}$ such that~${\varphi(p) \in \overline{X}}$. 
    Note that~${p \neq q}$ since~${\overline{X} \cap \overline{Y} = \emptyset}$. 
    Now let~${M := \lbrace \varphi(r) \in \abs{G} \; ; \; p < r < q \rbrace}$. 
    Obviously,~$M$ contains only inner points of edges in~${E(X, Y)}$. 
    Since~$M$ is connected, we obtain~${M = \mathring{e}}$ for some edge~${e \in E(X, Y)}$. 
    Using that~${<_{\overline{W}} {\upharpoonright} \mathring{e}}$ equals~${<_e {\upharpoonright} \mathring{e}}$ because~${(\overline{W}, <_{\overline{W}})}$ is a directed $s$--$t$~walk, we see that~${e \in W \cap \overrightarrow{E}(X, Y)}$. 
    \begin{comment}
        Next we consider~${\varphi {\upharpoonright} [q, 1]}$ and iterate the previous argument. 
        Using that~${E(X, Y)}$ contains only finitely many edges, we get inductively 
        that~${\abs{W \cap \overrightarrow{E}(X, Y)} = \abs{W \cap \overrightarrow{E}(Y, X)} + 1}$ is true. 
        
        Next we consider~${\varphi {\upharpoonright} [q, 1]}$ and iterate the previous argument. 
        Using that~${E(X, Y)}$ contains only finitely many edges, we get inductively 
        that~${\abs{W \cap \overrightarrow{E}(X, Y)} = \abs{W \cap \overrightarrow{E}(Y, X)} + 1}$ is true.
    \end{comment}
    By the continuity of~$\varphi$, we get that ${\varphi(q) = y}$ for some vertex~${y \in Y}$. 
    If ${\abs{W \cap \overrightarrow{E}(Y, X)} = 0}$, we know that~${\abs{W \cap \overrightarrow{E}(X, Y)} = 1}$ since ${\varphi {\upharpoonright} [q, 1]}$ is connected and hence a subset of~$\overline{Y}$, and we are done.
    Otherwise, consider ${\varphi {\upharpoonright} [q, 1]}$, which is a witness for~${(\overline{Q}, <_{\overline{Q}})}$ being a directed $y$--$t$ walk where 
    ${Q = \{ e \in W \; ; \; \forall a \in \mathring{e} \; : \; \varphi^{-1}(a) > q \}}$. 
    Note that since ${\abs{W \cap \overrightarrow{E}(Y, X)} > 0}$ we get that ${\abs{Q \cap \overrightarrow{E}(Y, X)} > 0}$ as well by the choice of~$q$.
    Therefore, $\overline{Q}$ also contains an element of~$\overline{X}$.
    Similarly as before, let ${p' \in [q, 1]}$ denote the smallest number such that~${\varphi(p') \in \overline{X}}$ and ${q' \in [q, p']}$ denote the biggest number such that~${\varphi(q') \in \overline{Y}}$.
    Now considering the set ${M' := \lbrace \varphi(r) \in \abs{G} \; ; \; q' < r < p' \rbrace}$ we obtain as before that ${M' = \mathring{f}}$ for some edge~${f \in E(X, Y)}$.
    More precisely, since~$Q$ is a directed $x$--$t$ walk, we get that ${f \in {Q \cap \overrightarrow{E}(Y, X)}}$.
    Finally, we consider the directed $\varphi(p')$--$t$ walk ${(\overline{P}, <_{\overline{P}})}$ with witness ${\varphi {\upharpoonright} [p', 1]}$ where ${P = \{ e \in W \; ; \; \forall a \in \mathring{e} \; : \; \varphi^{-1}(a) > p' \}}$.
    By the previous observations we know that ${\abs{P \cap \overrightarrow{E}(X, Y)} = \abs{W \cap \overrightarrow{E}(X, Y)} - 1}$ and ${\abs{P \cap \overrightarrow{E}(Y, X)} = \abs{W \cap \overrightarrow{E}(Y, X)} - 1}$ hold.
    Using that~${E(X, Y)}$ contains only finitely many edges, we inductively get that~${\abs{W \cap \overrightarrow{E}(X, Y)} = \abs{W \cap \overrightarrow{E}(Y, X)} + 1}$ is true.

    The implication from~\ref{item:walk3} to~\ref{item:walk2} is immediate. 
    
    It remains to show that~\ref{item:walk2} implies~\ref{item:walk1}. 
    For this we first fix a sequence~${(S_n)_{n \in \mathbb{N}}}$ of finite vertex sets~${S_n \subseteq V(G)}$ such that~${S_n \subsetneqq S_{n+1}}$ for every~${n \in \mathbb{N}}$ and~${\bigcup_{n \in \mathbb{N}} S_n = V(G)}$. 
    For every~${n \in \mathbb{N}}$ let~$G_n$ denote the digraph which arises by contracting~${E(G-S_n)}$ in~$G$. 
    Since~$G$ is locally finite, we know that each~$G_n$ is a finite digraph. 
    We call the vertices of~$G_n$ that are not contained in~$S_n$ \emph{dummy vertices}. 
    Note that each dummy vertex of~$G_n$ corresponds to a unique weak component of~${G - S_n}$. 
    
    If some~${v \in V(G) \cup \Omega(G)}$ is not contained in~$S_n$, there exists a unique component~$C_n$ of~${G-S_n}$ such that~${v \in \overline{C_n}}$. 
    This is obviously true if~$v$ is a vertex of~$G$, but also holds if~$v$ is an end of~$G$. 
    To see the latter statement suppose~${v \in \Omega(G)}$ is contained in~$\overline{C_n}$ for a component~$C_n$ of~$G-S_n$. 
    Then the cut~${E(V(C_n), V(G) \setminus V(C_n))}$ is finite as~$S_n$ is finite and~$G$ is locally finite. 
    Hence ${\overline{V(C_n)} \cap (\overline{V(G) \setminus V(C_n))} = \emptyset}$ by Lemma~\ref{lem:jumping-arc}, which means that~$v$ cannot lie in the closure of another component of~${G - S_n}$. 
    With a slight abuse of notation, we refer to the dummy vertex of~$G_n$ corresponding to~$C_n$ as~$v$.

    Since for each~${n \in \mathbb{N}}$ every cut of~$G_n$ corresponds to a finite cut of~$G$, we obtain by Theorem~\ref{thm:fin-edmonds} that~${F \cap E(G_n)}$ contains the edge set of a finite directed $s$--$t$~walk in the digraph~$G_n$. 
    Furthermore, any finite directed $s$--$t$~walk~${(\mathcal{W}_{n+1}, <_{\mathcal{W}_{n+1}})}$ in~$G_{n+1}$ induces a finite directed $s$--$t$~walk~${(\mathcal{W}_{n}, <_{\mathcal{W}_{n}})}$ in~$G_n$ 
    via~${E(\mathcal{W}_{n}) := E(\mathcal{W}_{n+1}) \cap E(G_n)}$ and defining~${<_{\mathcal{W}_{n}}}$ as~${<_{\mathcal{W}_{n+1}} {\upharpoonright} E(\mathcal{W}_{n})}$. 
    Note that each maximal interval with respect to~${<_{\mathcal{W}_{n+1}}}$ of~${E(\mathcal{W}_{n+1}) \setminus E(\mathcal{W}_n)}$ corresponds to some $v$--$w$~walk where~$v$ and~$w$ are the same dummy vertex of~$G_n$. 
    Hence each time a dummy vertex of~$G_n$ appears as the head of some edge~${e \in E(\mathcal{W}_n)}$ there is a corresponding, possibly trivial, walk~$\mathcal{W}_{n+1}^e$ using edges of of such a maximal interval with the induced order~${<_{\mathcal{W}_{n+1}}{\upharpoonright}E(\mathcal{W}_{n+1}^e)}$. 
    
    For every~${n \in \mathbb{N}}$ let~$V_n$ denote the set of all finite directed $s$--$t$~walks in~$G_n$ that use only edges from~$F$. 
    Obviously, each set~$V_n$ is finite as~$G_n$ is a finite digraph. 
    By the previously given arguments, none of the sets~$V_n$ is empty and each element of~$V_{n+1}$ induces one of~$V_n$. 
    Hence, we get a sequence~${((\mathcal{W}_{n}, <_{\mathcal{W}_{n}}))_{n \in \mathbb{N}}}$ of finite directed $s$--$t$~walks with~${(\mathcal{W}_{n}, <_{\mathcal{W}_{n}}) \in V_n}$ such that~${{E(\mathcal{W}_{n+1}) \cap E(\mathcal{W}_{n}) = E(\mathcal{W}_{n})}}$ and~${<_{\mathcal{W}_{n+1}} {\upharpoonright} E(\mathcal{W}_{n})}$ equals~${<_{\mathcal{W}_{n}}}$ for every~${n \in \mathbb{N}}$ by Lemma~\ref{lem:koenig}. 
    We define~${W_n := E(\mathcal{W}_{n})}$ for every~${n \in \mathbb{N}}$. 
    Next we set~${W := \bigcup_{n \in \mathbb{N}} W_{n}}$ and~${<_W := \bigcup_{n \in \mathbb{N}} <_{\mathcal{W}_{n}}}$. 
    Furthermore, we define a linear order~${<_{\overline{W}}}$ on~$\mathring{W}$ as follows for~${p, q \in \mathring{W}}$ with~${p \neq q}$: 
    \[
        p <_{\overline{W}} q \; \textnormal{ if } \; 
            \begin{cases} 
                p \in \mathring{e} \textnormal{ and } q \in \mathring{f} \textnormal{  with } e <_W f \textnormal{ for some } e,f \in W \textnormal{ with } e \neq f \textnormal{, or} \\
                p, q \in \mathring{e} \textnormal{  and } p <_e q \textnormal{ for some } e \in W.
            \end{cases}
    \]
    Now we claim that~${(\overline{W}, <_{\overline{W}})}$ is a directed topological $s$--$t$~walk in~$\abs{G}$. 
    In order to show this we first have to define a witness~$\varphi$ for~${(\overline{W}, <_{\overline{W}})}$. 
    We shall obtain~$\varphi$ as a limit of countably many certain witnesses~$\varphi_n$ of directed topological walks~${(\overline{W_n}, <_{\overline{W_n}})}$ in~$\abs{G_n}$ that we define inductively, where~${<_{\overline{W_n}}}$ is defined analogously as~${<_{\overline{W}}}$ but with respect to~$W_n$. 
    
    For~${n = 0}$ we start with a witness~$\varphi_0$ of the directed topological $s$--$t$~walk~${(\overline{W_0}, <_{\overline{W_0}})}$ in~$\abs{G_0}$ which pauses at every dummy vertex of~$G_0$ contained in~$\overline{W_0}$. 
    
    Now suppose that the witness~$\varphi_n$ of~${(\overline{W_n}, <_{\overline{W_n}})}$ has already been defined such that it pauses at every dummy vertex of~$G_n$ that is contained in~$\overline{W_n}$. 
    Then we define~$\varphi_{n+1}$ as some witness of~${(\overline{W_{n+1}}, <_{\overline{W_{n+1}}})}$ as follows. 
    For every edge~${e \in W_n}$ whose head is a dummy vertex of~$G_n$, let~$W_{n+1}^e$ be the edge set of the walk~$\mathcal{W}_{n+1}^e$ as above and let~$\varphi_{n+1}^e$ be a witness that~$\overline{\mathcal{W}_{n+1}^e}$ is the corresponding directed topological walk that pauses at every dummy vertex of~$G_{n+1}$ that is contained in~$\overline{W_{n+1}^e}$. 
    Starting with~$\varphi_{n}$, each time we enter some dummy vertex~$d$ of~$G_n$ by an edge~$e$, we replace the image of the interval that is mapped to~$d$ with a rescaled version of~$\varphi_{n+1}^e$. 
    
    Using the maps~$\varphi_n$ we are able to define~$\varphi$ as follows: 
    For every~${q \in [0, 1]}$ for which there exists an~${n \in \mathbb{N}}$ such that~${\varphi_n(q) \in |G[S_n]| \subseteq |G_n|}$, we set~${\varphi(q) := \varphi_n(q)}$. 
    Otherwise,~$\varphi_n(q)$ corresponds to a contracted component~$C_n$ of~${G - S_n}$ for every~${n \in \mathbb{N}}$. 
    Since~${S_n \subsetneqq S_{n+1}}$ for every~${n \in \mathbb{N}}$ and~${\bigcup_{n \in \mathbb{N}} S_n = V(G)}$, it is easy to check that~${\bigcap_{n \in \mathbb{N}} \overline{C_n} = \{ \omega \}}$ for some end~$\omega$ of~$G$. 
    In this case, we define~${\varphi(q) := \omega}$. 
    This completes the definition of~$\varphi$. 
    It is straightforward to verify that~$\varphi$ is continuous and also onto~$\overline{W}$ because each~$\varphi_n$ is onto~$\overline{W_n}$ and~${W := \bigcup_{n \in \mathbb{N}} W_n}$. 
    This ensures that it is a witness of~${(\overline{W}, <_{\overline{W}})}$ being a topological \linebreak $s$--$t$~walk. 
    Note that the linear order~${<_{\overline{W}} {\upharpoonright} \mathring{e}}$ equals~${<_{e} {\upharpoonright} \mathring{e}}$ for each edge~${e \in W}$ since each linear order~$<_{\overline{W_n}}$ has this property. 
    Hence,~$\varphi$ witnesses that~${(\overline{W}, <_{\overline{W}})}$ is a directed topological $s$--$t$~walk in~$\abs{G}$ with~${W \subseteq F}$. 
\end{proof}

We proceed with a lemma which gives a combinatorial description for a standard subspace to be a directed arc. 

\begin{lemma}
    \label{lem:equi_lemma_arc}
    Let~$G$ be a locally finite weakly connected digraph, ${s, t \in V(G) \cup \Omega(G)}$ with~${s \neq t}$ and~${A \subseteq E(G)}$. 
    Then the following statements are equivalent:
    \begin{enumerate}
        [label=(\roman*)]
        \item \label{item:arc1}
            $\overline{A}$ is a directed $s$--$t$~arc. 
        \item \label{item:arc2}
            $A$ is inclusion-wise minimal such that~${\abs{A \cap \overrightarrow{E}(X, Y)} \geq 1}$ holds for every finite cut~${E(X, Y)}$ of~$G$ with~${s \in \overline{X}}$ and~${t \in \overline{Y}}$. 
        \item \label{item:arc3}
            $A$ is inclusion-wise minimal such that ${\abs{A \cap \overrightarrow{E}(X, Y)} = \abs{A \cap \overrightarrow{E}(Y, X)} + 1}$ holds for every finite cut~${E(X, Y)}$ of~$G$ with~${s \in \overline{X}}$ and ${t \in \overline{Y}}$. 
    \end{enumerate}
\end{lemma}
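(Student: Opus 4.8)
The plan is to prove the cyclic chain of implications \ref{item:arc1} $\Rightarrow$ \ref{item:arc3} $\Rightarrow$ \ref{item:arc2} $\Rightarrow$ \ref{item:arc1}, mirroring the structure of Lemma~\ref{lem:equi_lemma_walk} and reducing to it wherever possible. The guiding observation is that the three conditions here are exactly the \emph{minimal} versions of the three conditions in Lemma~\ref{lem:equi_lemma_walk}, so the only genuinely new content is the passage from a directed topological walk to a directed arc, i.e.\ from an arbitrary continuous witness to an \emph{injective} one. I would isolate this as the single substantial point and obtain everything else formally from Lemma~\ref{lem:equi_lemma_walk}.

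For \ref{item:arc3} $\Rightarrow$ \ref{item:arc2} I would argue as follows. The balance condition in~\ref{item:arc3} trivially implies the inequality ${\abs{A \cap \overrightarrow{E}(X, Y)} \geq 1}$, so $A$ satisfies the defining property of~\ref{item:arc2}; it remains to transfer minimality. If some ${A_0 \subsetneq A}$ satisfied the inequality for all relevant finite cuts, then by the implication \ref{item:walk2} $\Rightarrow$ \ref{item:walk3} of Lemma~\ref{lem:equi_lemma_walk} there would be a subset ${W \subseteq A_0 \subsetneq A}$ meeting the balance condition, contradicting the minimality of $A$ assumed in~\ref{item:arc3}. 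For \ref{item:arc2} $\Rightarrow$ \ref{item:arc1} I would first use \ref{item:walk2} $\Rightarrow$ \ref{item:walk1} of Lemma~\ref{lem:equi_lemma_walk} to find a directed topological $s$--$t$~walk inside $\overline{A}$ with edge set ${W \subseteq A}$; since this walk witnesses that $W$ itself satisfies the inequality (by \ref{item:walk1} $\Rightarrow$ \ref{item:walk2}), minimality forces ${W = A}$, so $\overline{A}$ \emph{is} a directed $s$--$t$~walk traversing every edge of $A$. Fixing a witness $\sigma$, if $\sigma$ is injective then it is a homeomorphism onto $\overline{A}$ (a continuous bijection from the compact interval to the Hausdorff space $\abs{G}$ of Proposition~\ref{prop:compact_Hausdorff}), so $\overline{A}$ is a directed $s$--$t$~arc as required.

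For \ref{item:arc1} $\Rightarrow$ \ref{item:arc3} I would note that a directed $s$--$t$~arc is in particular a directed $s$--$t$~walk whose edge set is all of $A$, so running the argument for \ref{item:walk1} $\Rightarrow$ \ref{item:walk3} of Lemma~\ref{lem:equi_lemma_walk} on this walk yields the balance condition for $A$ itself. For minimality I would use that $\overline{A}$ is homeomorphic to $[0,1]$: for each edge ${e \in A}$ the set ${\overline{A} \setminus \mathring{e}}$ is disconnected with $s$ and $t$ in different components, so the connected image of \emph{any} directed $s$--$t$~topological walk contained in $\overline{A}$ must meet $\mathring{e}$ and hence use $e$; thus every such walk has edge set exactly $A$. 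Consequently, if some ${A' \subsetneq A}$ satisfied the balance condition, then by Lemma~\ref{lem:equi_lemma_walk} the subspace ${\overline{A'} \subseteq \overline{A}}$ would contain a directed $s$--$t$~walk whose edge set ${W \subseteq A' \subsetneq A}$ would have to equal $A$, a contradiction.

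The main obstacle is the injectivity step in \ref{item:arc2} $\Rightarrow$ \ref{item:arc1}: showing that the minimal directed $s$--$t$~walk $\overline{A}$ admits an injective witness, i.e.\ is genuinely an arc. The plan is a splicing argument: if a witness $\sigma$ satisfies ${\sigma(a) = \sigma(b)}$ for some ${a < b}$, then this common value is a vertex or an end (inner edge points have unique preimages), and after discarding the maximal subintervals on which $\sigma$ is constant one may assume $\sigma {\upharpoonright} [a,b]$ traverses at least one edge in full; deleting the open portion over $(a,b)$ and regluing at the repeated point then yields a witness of a directed $s$--$t$~walk with edge set ${W' \subsetneq A}$ that still satisfies ${\abs{W' \cap \overrightarrow{E}(X, Y)} \geq 1}$, contradicting the minimality in~\ref{item:arc2}. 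The delicate points I would have to handle carefully are the continuity of the reglued witness when the repeated point is an end, and the bookkeeping ensuring that at least one full edge is removed rather than merely a pause; both are routine but rely on the local finiteness of $G$ and on the uniqueness of preimages of inner edge points.
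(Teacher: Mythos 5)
Your proposal is correct and follows essentially the same route as the paper: the same cycle \ref{item:arc1}~$\Rightarrow$~\ref{item:arc3}~$\Rightarrow$~\ref{item:arc2}~$\Rightarrow$~\ref{item:arc1}, the same reduction of everything except injectivity to Lemma~\ref{lem:equi_lemma_walk}, and the same splicing argument at a repeated point (the paper realises it by taking the smallest and largest preimages of the offending point and concatenating the two outer pieces of the witness). The ``delicate points'' you flag are treated at the same level of detail in the paper itself.
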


\begin{proof}
    First we show the implication from~\ref{item:arc1} to~\ref{item:arc3}. 
    As~$\overline{A}$ is a directed $s$--$t$~arc, it is also a directed topological $s$--$t$~walk.
    So by Lemma~\ref{lem:equi_lemma_walk}, we only need to check the minimality of~$A$ for property~\ref{item:arc3}. 
    Since~$\overline{A}$ is an $s$--$t$~arc, we know that~$s$ and~$t$ are in different topological components of~${\overline{A \setminus \lbrace e \rbrace}}$ for any edge~${e \in A}$. 
    So no proper subset of~$A$ has the property that its closure in~$\abs{G}$ contains a directed topological $s$--$t$~walk. 
    Again by Lemma~\ref{lem:equi_lemma_walk} we know that no proper subset of~$A$ satisfies statement~\ref{item:walk3} of Lemma~\ref{lem:equi_lemma_walk}. 
    This proves the minimality of~$A$ and hence statement~\ref{item:arc3}. 
    
    Next let us verify that~\ref{item:arc3} implies~\ref{item:arc2}. 
    Assume for a contradiction that statement~\ref{item:arc3} holds, but~\ref{item:arc2} does not. 
    Then there must exist a proper subset~${A' \subsetneqq A}$ 
    meeting~${\overrightarrow{E}(X, Y)}$ for every finite cut~${E(X, Y)}$ of~$G$ with~${s \in  \overline{X}}$ and~${t \in \overline{Y}}$. 
    By Lemma~\ref{lem:equi_lemma_walk} we get that~$A'$ satisfies also statement~\ref{item:walk3} of Lemma~\ref{lem:equi_lemma_walk}. 
    This contradicts the minimality of~$A$. 
    
    It remains to prove the implication from~\ref{item:arc2} to~\ref{item:arc1}. 
    By assuming~\ref{item:arc2} we know from Lemma~\ref{lem:equi_lemma_walk} that~$\overline{A}$ contains a directed topological $s$--$t$~walk and by the minimality of~$A$ we know that~$\overline{A}$ is in fact a directed topological $s$--$t$~walk, say witnessed by~${\varphi : [0, 1] \longrightarrow \abs{G}}$. 
    Now suppose for a contradiction that~$\overline{A}$ is not a directed $s$--$t$~arc. 
    Then there exists a point~${a \in V(G) \cup \Omega(G)}$ that spoils injectivity for~$\varphi$. 
    Note that~$\overline{A}$ is compact because it is a closed set in~$\abs{G}$, which is a compact space by Proposition~\ref{prop:compact_Hausdorff}. 
    Since~$\varphi$ is continuous and~$\overline{A}$ is compact, there exists a smallest number~${x \in [0, 1]}$ and a largest number~${y \in [0, 1]}$ such that~${\varphi(x) = \varphi(y) = a}$. 
    We obtain from this that the image of~${\varphi {\upharpoonright} [0, x]}$ is a directed topological $s$--$a$~walk and the image of~${\varphi {\upharpoonright} [y, 1]}$ is a directed topological $a$--$t$~walk. 
    Concatenating these two walks yields another directed topological $s$--$t$~walk, which is the closure in~$\abs{G}$ of some edge set~${A' \subseteq A}$. 
    Knowing that~${x \neq y}$, we get that~${A' \subsetneqq A}$ since the image of~${\varphi {\upharpoonright} [x, y]}$ contains points that correspond to inner points of edges. 
    This is a contradiction to the minimality of~$A$. 
\end{proof}

We conclude this section with the following corollary which allows us to extract a directed $s$--$t$~arc from a directed topological $s$--$t$~walk for distinct points~${s, t}$ of~$\abs{G}$. 

\begin{corollary}
    \label{cor:arc_in_walk}
    Let ${s, t \in V(G) \cup \Omega(G)}$ with~${s \neq t}$ for some locally finite weakly connected digraph~$G$. 
    Then every directed topological $s$--$t$~walk in $\abs{G}$ contains a directed $s$--$t$~arc. 
\end{corollary}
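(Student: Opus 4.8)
The plan is to read off the corollary from the two preceding lemmas, using Lemma~\ref{lem:equi_lemma_walk} to pass from the given walk to its finite-cut data and Lemma~\ref{lem:equi_lemma_arc} to convert a suitably minimised edge set back into an arc. First I would fix a directed topological $s$--$t$~walk~$W$ in~$\abs{G}$ and let~$F$ denote the set of edges of~$G$ whose interior meets~$W$. Since a directed walk traverses each such edge entirely (reversing inside an edge would violate injectivity on its inner points) and since an $s$--$t$~topological walk is a standard, hence closed, subspace, we have~${W = \overline{F}}$. In particular~$\overline{F}$ contains a directed topological $s$--$t$~walk, so the implication from~\ref{item:walk1} to~\ref{item:walk2} in Lemma~\ref{lem:equi_lemma_walk} shows that~$F$ meets~${\overrightarrow{E}(X, Y)}$ for every finite cut~${E(X, Y)}$ of~$G$ with~${s \in \overline{X}}$ and~${t \in \overline{Y}}$, i.e.~$F$ satisfies statement~\ref{item:walk2} of Lemma~\ref{lem:equi_lemma_walk}.

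Next I would extract an inclusion-minimal subset. Let~$\mathcal{A}$ be the family of all subsets of~$F$ satisfying statement~\ref{item:walk2} of Lemma~\ref{lem:equi_lemma_walk}, ordered by inclusion; note~${F \in \mathcal{A}}$. To obtain an inclusion-minimal member of~$\mathcal{A}$ by Zorn's Lemma, I must verify that the intersection~${A^* := \bigcap_{i} A_i}$ of any chain~${(A_i)}$ in~$\mathcal{A}$ again lies in~$\mathcal{A}$, as it is then a lower bound for the chain. Fix a finite cut~${E(X, Y)}$ with~${s \in \overline{X}}$ and~${t \in \overline{Y}}$. The set~${C := \overrightarrow{E}(X, Y) \cap F}$ is finite, so the sets~${A_i \cap C}$ form a chain of non-empty subsets of a finite set; such a chain has a least element~${A_{i_0} \cap C}$, whence~${A^* \cap C = A_{i_0} \cap C \neq \emptyset}$. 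Thus~$A^*$ still meets~${\overrightarrow{E}(X, Y)}$, so~${A^* \in \mathcal{A}}$, and Zorn's Lemma yields an inclusion-minimal~${A \in \mathcal{A}}$ with~${A \subseteq F}$.

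Finally I would apply Lemma~\ref{lem:equi_lemma_arc}. By construction~$A$ is inclusion-wise minimal subject to satisfying statement~\ref{item:walk2} of Lemma~\ref{lem:equi_lemma_walk}, which is precisely statement~\ref{item:arc2} of Lemma~\ref{lem:equi_lemma_arc}; hence the implication from~\ref{item:arc2} to~\ref{item:arc1} there gives that~$\overline{A}$ is a directed $s$--$t$~arc. Since~${A \subseteq F}$ and~${W = \overline{F}}$ is closed, we have~${\overline{A} \subseteq \overline{F} = W}$, so~$W$ contains the directed $s$--$t$~arc~$\overline{A}$, as required.

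The step I expect to be the main obstacle is the existence of an inclusion-minimal set in~$\mathcal{A}$, since~$F$ may be infinite and minimality is not automatic; the point that makes the Zorn argument go through is the local finiteness of~$G$, which forces each finite cut to contribute only finitely many edges, so that chains of qualifying sets cannot ``lose'' every edge across any fixed finite cut in the limit. As an alternative to invoking Zorn's Lemma, one could instead observe that the passage from statement~\ref{item:arc2} to~\ref{item:arc1} in the proof of Lemma~\ref{lem:equi_lemma_arc} already describes how to excise repeated points from a directed topological $s$--$t$~walk, and apply that excision directly to~$W$; but routing everything through the clean minimality characterisation of Lemma~\ref{lem:equi_lemma_arc} keeps the argument shortest.
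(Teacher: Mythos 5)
Your proof is correct and follows essentially the same route as the paper's: take the edge set of the given walk, note that it satisfies condition~\ref{item:walk2} of Lemma~\ref{lem:equi_lemma_walk}, apply Zorn's Lemma (using the finiteness of each relevant cut to show chain intersections still meet every such cut) to obtain an inclusion-minimal such subset, and conclude via Lemma~\ref{lem:equi_lemma_arc}. The only difference is cosmetic: you spell out the chain argument via least elements of finite chains where the paper speaks of a final segment of the chain stabilising on each cut.
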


\begin{proof}
    Let~$\overline{W}$ be a directed topological $s$--$t$~walk with~${W \subseteq E(G)}$. 
    So~$W$ has property~\ref{item:walk2} of Lemma~\ref{lem:equi_lemma_walk}. 
    Now consider the set~$\mathcal{W}$ of all subsets of~$W$ that also have property~\ref{item:walk2} of Lemma~\ref{lem:equi_lemma_walk}. 
    This set is ordered by inclusion and not empty since~${W \in \mathcal{W}}$. 
    Next let us check that every decreasing chain~${C \subseteq \mathcal{W}}$ is bounded from below by~$\bigcap C$, which is an element of~$\mathcal{W}$. 
    Obviously, ${\bigcap C \subseteq c}$ holds for every~${c \in C}$. 
    To see that~${\bigcap C}$ is an element of~$\mathcal{W}$ note that for every finite cut~${E(X, Y)}$ of~$G$ with~${s \in \overline{X}}$ and~${t \in \overline{Y}}$ there exists a final segment~$C'$ of the decreasing chain~$C$ such that all~${c \in C'}$ contain the same edges from~${E(X, Y)}$. 
    As every~${c \in C}$ has also at least one edge from~${E(X, Y)}$, we know that the same is true for~${\bigcap C}$, which shows that~${\bigcap C \in \mathcal{W}}$ holds. 
    Now Zorn's Lemma implies that~$\mathcal{W}$ has a minimal element, which is a directed $s$--$t$~arc by Lemma~\ref{lem:equi_lemma_arc}. 
\end{proof}

\section{Packing pseudo arborescences}
\label{sec:pseudo-packing}

We begin this section with a lemma characterising when a packing of~${k \in \mathbb{N}}$ many edge-disjoint spanning $r$-reachable sets is possible in a locally finite weakly connected digraph~$G$ with~${r \in V(G) \cup \Omega(G)}$. 
This lemma is the main ingredient to prove our first main result. 
The proof is mainly based on a compactness argument. 

\begin{lemma}
    \label{lem:k_many-reachable}
    A locally finite weakly connected digraph~$G$ with~${r \in V(G) \cup \Omega(G)}$ has~${k \in \mathbb{N}}$ edge-disjoint spanning $r$-reachable sets if and only if every bipartition~${(X, Y)}$ of~${V(G)}$ with~${r \in \overline{X}}$ and~${\abs{E(X, Y)} < \infty}$ satisfies~${d^-(Y) \geq k}$. 
\end{lemma}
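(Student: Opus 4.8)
The plan is to prove the two directions separately, with the nontrivial content residing in the sufficiency direction, which I would establish by a compactness argument à la König's Infinity Lemma (Lemma~\ref{lem:koenig}) combined with the finite Edmonds' Theorem (Theorem~\ref{thm:fin-edmonds}) applied to finite contractions of~$G$.

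\emph{Necessity.} Suppose $G$ has $k$ edge-disjoint spanning $r$-reachable sets $F_1, \dots, F_k$. Let $(X, Y)$ be any bipartition of $V(G)$ with $r \in \overline{X}$ and $\abs{E(X, Y)} < \infty$. By the definition of a spanning $r$-reachable set, each $F_i$ satisfies $\abs{F_i \cap \overrightarrow{E}(X, Y)} \geq 1$, since $E(X, Y)$ is a finite cut with $r \in \overline{X}$ and $Y \cap (V(G) \setminus \lbrace r \rbrace) \neq \emptyset$ (as $Y$ is non-empty and cannot consist solely of $r$ when $r \in \overline{X}$). Because the $F_i$ are pairwise disjoint subsets of $\overrightarrow{E}(X, Y) \subseteq E(X, Y)$, we obtain $d^-(Y) = \abs{\overrightarrow{E}(X, Y)} \geq k$.

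\emph{Sufficiency.} This is the main obstacle. Assume every such bipartition satisfies $d^-(Y) \geq k$. First I would fix an exhaustion $(S_n)_{n \in \mathbb{N}}$ of $V(G)$ by finite sets with $S_n \subsetneqq S_{n+1}$ and $\bigcup_n S_n = V(G)$, and for each $n$ form the finite digraph $G_n$ obtained by contracting $E(G - S_n)$, exactly as in the proof of Lemma~\ref{lem:equi_lemma_walk}. The key point is that the hypothesis transfers to each $G_n$: every cut of $G_n$ corresponds to a finite cut $E(X, Y)$ of $G$, and after identifying the root's location (the dummy vertex of $G_n$ whose closure contains $r$, or $r$ itself if $r \in S_n$) one checks that the degree condition $d^-(Y) \geq k$ in $G$ yields the finite Edmonds condition in $G_n$. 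Hence Theorem~\ref{thm:fin-edmonds} provides $k$ edge-disjoint spanning arborescences in each $G_n$, equivalently $k$ edge-disjoint spanning $r$-reachable sets in $G_n$.

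\emph{Compactness.} To glue these finite solutions into a single infinite one, I would let $V_n$ be the (finite) set of all $k$-tuples of edge-disjoint spanning $r$-reachable sets of $G_n$, and define a projection from $V_{n+1}$ to $V_n$ by intersecting each edge set with $E(G_n)$ — here I must verify that this restriction of a solution in $G_{n+1}$ is again a valid solution in $G_n$, which follows because any cut of $G_n$ lifts to a cut of $G_{n+1}$ separating the same two sides, so each restricted set still meets $\overrightarrow{E}(X, Y)$. Applying König's Infinity Lemma (Lemma~\ref{lem:koenig}) to this inverse system yields a compatible sequence, whose union gives $k$ edge sets $F_1, \dots, F_k \subseteq E(G)$; these are pairwise disjoint since disjointness is preserved at every finite stage. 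Finally I would confirm that each $F_i$ is a spanning $r$-reachable set of $G$: for any finite cut $E(X, Y)$ with $r \in \overline{X}$ and $Y \cap (V(G) \setminus \lbrace r \rbrace) \neq \emptyset$, pick $n$ large enough that the cut is already ``visible'' in $G_n$ (i.e.\ the finitely many edges of $E(X, Y)$ all lie in $E(G_n)$ and $X, Y$ restrict to a genuine bipartition of $V(G_n)$); then $\abs{F_i \cap \overrightarrow{E}(X, Y)} = \abs{(F_i \cap E(G_n)) \cap \overrightarrow{E}(X, Y)} \geq 1$ from the corresponding property in $G_n$. The delicate bookkeeping lies in tracking the root and the dummy vertices across the contractions and ensuring the cut condition is faithfully preserved in both directions of the projection; once that is in place, the compactness step is routine.
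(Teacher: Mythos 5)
Your proposal is correct and follows essentially the same route as the paper: necessity is immediate from disjointness, and sufficiency is proved by contracting the components of ${G - S_n}$ for a finite exhaustion ${(S_n)_{n\in\mathbb{N}}}$, invoking the finite Edmonds theorem on each ${G_n}$, and gluing the finite solutions with K\"onig's Infinity Lemma exactly as you describe. The compatibility and limit-verification steps you flag as needing care are precisely the ones the paper carries out.
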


\begin{proof}
    The condition that every bipartition~${(X, Y)}$ of~${V(G)}$ with~${r \in \overline{X}}$ and~${\abs{E(X, Y)} < \infty}$ satisfies~${d^-(Y) \geq k}$ is obviously necessary for the existence of~$k$ edge-disjoint spanning $r$-reachable sets. 
    
    Let us now prove the converse. 
    First we fix a sequence~${(S_n)_{n \in \mathbb{N}}}$ of finite vertex sets~${S_n \subseteq V(G)}$ such that~${\bigcup_{n \in \mathbb{N}} S_n = V(G)}$. 
    For every~${n \in \mathbb{N}}$ let~$G_n$ denote the digraph which arises by contracting, inside of~$G$, each weak component of~${G-S_n}$ to a single vertex. 
    Here we keep multiple edges, but delete loops that arise. 
    Since~$G$ is locally finite, we know that each~$G_n$ is a finite digraph. 
    
    Note that, as in the proof of Lemma~\ref{lem:equi_lemma_walk}, if~${r \notin S_n}$, there exists a unique component~$C_n$ of~${G-S_n}$ such that~${r \in \overline{C_n}}$ and we refer to the vertex of~$G_n$ corresponding to~$C_n$ as~$r$. 
    
    Now we define~$V_n$ as the set of all $k$-tuples consisting of~$k$ edge-disjoint spanning \linebreak $r$-reachable sets of~$G_n$. 
    As every cut of~$G_n$ is finite and also corresponds to a cut of~$G$, our labelling with~$r$ ensures that each~$G_n$ has~$k$ edge-disjoint arborescences rooted in~$r$ by Theorem~\ref{thm:fin-edmonds}. 
    So none of the~$V_n$ is empty. 
    Furthermore, each~$V_n$ is finite as~$G_n$ is a finite digraph. 
    
    Next we show that every spanning $r$-reachable set~$F_{n+1}$ of~$G_{n+1}$ induces one for~$G_n$ via~${F_n := F_{n+1} \cap E(G_n)}$. 
    So let~$F_{n+1}$ be given and consider a cut~${E(X_n, Y_n)}$ of~$G_n$ with~${r \in X_n}$. 
    As each component of~${G-S_{n+1}}$ is contained in a component of~${G-S_{n}}$, we can find a cut~${E(X_{n+1}, Y_{n+1})}$ of~$G_{n+1}$ with~${r \in X_{n+1}}$ such that ${\overrightarrow{E}(X_n, Y_n) = \overrightarrow{E}(X_{n+1}, Y_{n+1})}$ (and in fact also ${\overrightarrow{E}(Y_n, X_n) = \overrightarrow{E}(Y_{n+1}, X_{n+1})}$). 
    Since~$F_{n+1}$ is a spanning $r$-reachable set of~$G_{n+1}$, we obtain that~$F_n$ is one of~$G_n$. 
    
    Now we can apply Lemma~\ref{lem:koenig} to the graph defined on the vertex set~${\bigcup_{n \in \mathbb{N}} V_n}$ where two vertices~${v_{n+1} \in V_{n+1}}$ and~${v_n \in V_n}$ are adjacent if the $i$-th spanning $r$-reachable set in~$v_n$ is induced by the $i$-th one of~${v_{n+1}}$ for every~$i$ with~${1 \leq i \leq k}$. 
    So we obtain a ray~${r_0r_1 \ldots}$ with~${r_n \in V_n}$ and set~${\mathcal{F} := (F^1, \ldots, F^k)}$ where~${F^i := \bigcup_{n \in \mathbb{N}} r^i_n}$ and~$r^i_n$ denotes the $i$-th entry of the $k$-tuple~$r_n$ for every~$i$ with~${1 \leq i \leq k}$. 
    Let us now check that each~$F^i$ is a spanning $r$-reachable set of~$G$. 
    As~${\bigcup_{n \in \mathbb{N}} S_n = V(G)}$ holds, we can find for every finite cut~${E(X, Y)}$ of~$G$ with~${r \in \overline{X}}$ an~${n \in \mathbb{N}}$ such that all endvertices of edges of~${E(X, Y)}$ are contained in~$S_n$. 
    Hence, there exists a cut~${E(X_n, Y_n)}$ of~$G_n$ with~${r \in X_n}$ such that ${\overrightarrow{E}(X_n, Y_n) = \overrightarrow{E}(X, Y)}$ and ${\overrightarrow{E}(Y_n, X_n) = \overrightarrow{E}(Y, X)}$. 
    Since each~$F^i$ contains the edges of~$r^i_n$, which is a spanning $r$-reachable set of~$G_n$ and, therefore, contains an edge of~${\overrightarrow{E}(X_n, Y_n)}$, we know that each~$F^i$ is a spanning $r$-reachable set of~$G$. 
    Finally, we get that all the~$F^i$ are pairwise edge-disjoint since for every~${n \in \mathbb{N}}$ the~$r^i_n$ are pairwise edge-disjoint. 
\end{proof}

The next lemma ensures the existence of pseudo-arborescences for a set ${Z \subseteq V(G) \setminus \lbrace r \rbrace}$ in the sense that every $r$-reachable set for~$Z$ contains one. 
The proof of this lemma works by an application of Zorn's Lemma and is very similar to the proof of Corollary~\ref{cor:arc_in_walk}. 
Therefore, we omit stating its proof. 

\begin{lemma}
    \label{lem:min_reach_exist}
    Let~$G$ be a locally finite weakly connected digraph and let~${Z \subseteq V(G) \setminus \lbrace r \rbrace}$ with~${r \in V(G) \cup \Omega(G)}$.
    Then each $r$-reachable set for~$Z$ in~$G$ contains a pseudo-arborescence for~$Z$ rooted in~$r$. 
    \qed
\end{lemma}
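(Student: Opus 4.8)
The plan is to apply Zorn's Lemma in exactly the manner used for Corollary~\ref{cor:arc_in_walk}. Let~$F$ be an $r$-reachable set for~$Z$ and consider the family~$\mathcal{F}$ of all subsets~${F' \subseteq F}$ that are themselves $r$-reachable for~$Z$, ordered by inclusion. This family is non-empty since~${F \in \mathcal{F}}$, and I seek an inclusion-wise minimal element of~$\mathcal{F}$, which by definition is a pseudo-arborescence for~$Z$ rooted in~$r$ contained in~$F$.

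To invoke Zorn's Lemma for the existence of such a minimal element, I would show that every decreasing chain~${C \subseteq \mathcal{F}}$ is bounded from below in~$\mathcal{F}$, the natural candidate being~${\bigcap C}$. Since~${\bigcap C \subseteq c}$ holds for every~${c \in C}$, it suffices to verify that~${\bigcap C}$ is again $r$-reachable for~$Z$, that is, that it meets~${\overrightarrow{E}(X, Y)}$ for every finite cut~${E(X, Y)}$ of~$G$ with~${r \in \overline{X}}$ and~${Y \cap Z \neq \emptyset}$.

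The decisive point is that finiteness of the cut makes this routine. Fix such a cut. Because~${E(X, Y)}$, and hence~${\overrightarrow{E}(X, Y)}$, is finite, the sets~${c \cap \overrightarrow{E}(X, Y)}$ for~${c \in C}$ are subsets of a finite set and form a chain under inclusion; therefore there is a final segment~${C'}$ of~$C$ along which they all coincide with some fixed set~${S \subseteq \overrightarrow{E}(X, Y)}$, and then~${\bigcap C}$ meets~${\overrightarrow{E}(X, Y)}$ in precisely~$S$. As each~${c \in C}$ is $r$-reachable for~$Z$, every such~$c$ has at least one edge in~${\overrightarrow{E}(X, Y)}$, so~$S$ is non-empty. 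Hence~${\bigcap C}$ contains an edge of~${\overrightarrow{E}(X, Y)}$, which shows~${\bigcap C \in \mathcal{F}}$, and Zorn's Lemma then yields a minimal element of~$\mathcal{F}$.

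The only genuine obstacle is this stabilisation step for the chain, and it is mild: it rests entirely on each relevant cut being finite, which is built directly into the definition of $r$-reachability. In particular no compactness argument beyond the finiteness of each individual cut is required, which is exactly why this proof parallels that of Corollary~\ref{cor:arc_in_walk}.
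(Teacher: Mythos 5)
Your proof is correct and follows exactly the route the paper intends: the authors omit the proof precisely because it is the same Zorn's Lemma argument as in Corollary~\ref{cor:arc_in_walk}, namely that a decreasing chain of $r$-reachable sets for~$Z$ has an $r$-reachable intersection because the trace of the chain on each finite cut~${\overrightarrow{E}(X,Y)}$ stabilises. Your stabilisation step and the conclusion via minimality match the paper's argument for Corollary~\ref{cor:arc_in_walk} essentially verbatim.
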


Combining Lemma~\ref{lem:k_many-reachable} and Lemma~\ref{lem:min_reach_exist} with~${Z = V(G) \setminus \lbrace r \rbrace}$ we now obtain one of our main results, Theorem~\ref{thm:weak-pseudo-edmonds}, 
which we now state in a slightly stronger version. 

\begin{theorem}
    \label{thm:pseudo-edmonds}
    A locally finite weakly connected digraph~$G$ with~${r \in V(G) \cup \Omega(G)}$ has~${k \in \mathbb{N}}$ edge-disjoint spanning pseudo-arborescences rooted in~$r$ if and only if every bipartition~${(X, Y)}$ of~${V(G)}$ with~${r \in \overline{X}}$ and~${\abs{E(X, Y)} < \infty}$ satisfies~${d^-(Y) \geq k}$. 
    \qed
\end{theorem}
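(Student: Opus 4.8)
The plan is to deduce the theorem directly from the two preceding lemmas, so that no new combinatorial or topological work is required: Lemma~\ref{lem:k_many-reachable} converts the cut condition into the existence of~$k$ edge-disjoint spanning $r$-reachable sets, and Lemma~\ref{lem:min_reach_exist} refines each such set into a spanning pseudo-arborescence without disturbing the others. Recall also that the displayed cut condition is literally the hypothesis of Lemma~\ref{lem:k_many-reachable}, since for a bipartition~${(X, Y)}$ of~${V(G)}$ we have~${d^-(Y) = \abs{\overrightarrow{E}(X, Y)}}$ because~${X = V(G) \setminus Y}$.

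For the sufficiency direction I would first apply Lemma~\ref{lem:k_many-reachable}, which furnishes~$k$ pairwise edge-disjoint spanning $r$-reachable sets~${F^1, \ldots, F^k \subseteq E(G)}$. Applying Lemma~\ref{lem:min_reach_exist} with~${Z = V(G) \setminus \lbrace r \rbrace}$ to each~$F^i$ separately yields an inclusion-wise minimal $r$-reachable set~${P^i \subseteq F^i}$ for~$Z$, which is precisely a spanning pseudo-arborescence rooted in~$r$ by definition. Since~${P^i \subseteq F^i}$ and the~$F^i$ are pairwise edge-disjoint, the~$P^i$ are pairwise edge-disjoint as well, giving the required packing.

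For the necessity direction I would simply note that a spanning pseudo-arborescence is in particular a spanning $r$-reachable set, so~$k$ edge-disjoint spanning pseudo-arborescences constitute~$k$ edge-disjoint spanning $r$-reachable sets; the cut condition then follows from the (easy) forward implication of Lemma~\ref{lem:k_many-reachable}, or, equally quickly, directly from the definition of a spanning $r$-reachable set applied to the finite cuts~${E(X, Y)}$ in question.

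Because all the substantive arguments---the compactness argument via K\"onig's Infinity Lemma and the Zorn's Lemma minimisation---are already packaged in the two lemmas, I do not expect a genuine obstacle here. The only point deserving care is to confirm that passing to a minimal $r$-reachable set does not destroy the spanning property: this is automatic, as for the choice~${Z = V(G) \setminus \lbrace r \rbrace}$ the notions of ``pseudo-arborescence for~$Z$'' and ``spanning pseudo-arborescence'' coincide by definition, so each~$P^i$ still meets the relevant directed cuts and hence spans~${V(G)}$ via Remark~\ref{rem:basic_rem}.
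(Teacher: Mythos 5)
Your proposal is correct and is precisely the paper's argument: the theorem is obtained by combining Lemma~\ref{lem:k_many-reachable} (which converts the cut condition into $k$ edge-disjoint spanning $r$-reachable sets) with Lemma~\ref{lem:min_reach_exist} applied with ${Z = V(G) \setminus \lbrace r \rbrace}$ to pass to inclusion-wise minimal, hence pairwise edge-disjoint, spanning pseudo-arborescences. The necessity direction likewise matches the easy forward implication already noted in Lemma~\ref{lem:k_many-reachable}.
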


\section{Structure of pseudo-arborescences}
\label{sec:pseudo-structure}

The following lemma characterises $r$-reachable sets in terms of directed arcs. 
Additionally, it justifies the naming of $r$-reachable sets. 

\begin{lemma}
    \label{lem:dir_rooted_arcs}
    Let~$G$ be a locally finite weakly connected digraph with sets~${F \subseteq E(G)}$ and~${Z \subseteq V(G) \setminus \lbrace r \rbrace}$ and let~${r \in V(G) \cup \Omega(G)}$. 
    Then~$F$ is an $r$-reachable set for~$Z$ in~$G$ if and only if there exists a directed $r$--$z$~arc inside~$\overline{F}$ for every~${z \in \overline{Z}}$.
\end{lemma}

\begin{proof}
    Let us first assume that~$F$ is an $r$-reachable set for~$Z$ in~$G$. 
    We fix some~${z \in \overline{Z}}$ and prove next that~${\abs{F \cap \overrightarrow{E}(X, Y)} \geq 1}$ holds for each finite cut~${E(X, Y)}$ with~${r \in \overline{X}}$ and~${z \in \overline{Y}}$. 
    If~$z$ is a vertex, this follows immediately from the definition of an $r$-reachable set for~$Z$. 
    In the case that~${z \in \Omega(G)}$, we also get that some vertex of~$Z$ lies in~$Y$. 
    This follows, because~$z$ is contained in the closed and, therefore, compact set~$\overline{Z}$, which implies the existence of a sequence~$S$ of vertices in~$Z$ converging to~$z$. 
    Since~${E(X, Y)}$ is a finite cut and~${z \in \overline{Y}}$, the set~${O := \abs{G} \setminus \overline{X \cup E(X,Y)}}$ is open, contained in~$\overline{G[Y]}$ and contains~$z$ by Lemma~\ref{lem:jumping-arc}. 
    Now~$O$ must contain infinitely many vertices of~$S$ and hence~$Y$ must do so as well. 
    Therefore, the desired inequality follows again by the definition of an $r$-reachable set for~$Z$. 
    
    Now we are able to use Lemma~\ref{lem:equi_lemma_walk}, which yields that~$\overline{F}$ contains a directed topological $r$--$z$~walk. 
    We complete the argument by applying Corollary~\ref{cor:arc_in_walk} telling us that~$\overline{F}$ contains also a directed $r$--$z$~arc. 
    
    Conversely, consider any finite cut~${E(X, Y)}$ with~${r \in \overline{X}}$ and~${Y \cap Z \neq \emptyset}$, say~${z \in Y \cap Z}$. 
    The assumption ensures the existence of a directed $r$--$z$~arc in~$\overline{F}$. 
    By Lemma~\ref{lem:equi_lemma_arc} we obtain that~${\abs{F \cap \overrightarrow{E}(X, Y)} \geq 1}$ holds as desired. 
\end{proof}

Now let us turn our attention towards spanning pseudo-arborescences rooted in some vertex or end in a locally finite weakly connected digraph. 
The question arises how similarly these objects behave compared to spanning arborescences rooted in some vertex in a finite graph. 
A basic property of finite arborescences is the existence of a unique directed path in the arborescence from the root to any other vertex of the graph. 
Closely related is the absence of any cycle, directed ones or even \textit{weak} ones, i.e.~cycles in the underlying undirected graph, in a finite arborescence since its underlying graph is a tree. 
Although we know by Lemma~\ref{lem:dir_rooted_arcs} that the closure of a spanning pseudo-arborescences contains a directed arc from the root to any other vertex (or even end) of the graph, we shall see in the following example that we can neither guarantee the uniqueness of such arcs nor avoid infinite circles (directed ones or \textit{weak} ones, i.e. circles of the underlying undirected graph).

\begin{example}
    \label{ex:multiple_arcs}
    Consider the graph depicted in Figure~\ref{fig:multiple_arcs}. 
    This graph contains spanning $r$-reachable sets, for example the bold black edges together with the bold grey edges. 
    However, every spanning $r$-reachable set of this graph must contain all bold black edges because for any head of such an edge there is no other edge of which it is a head. 
    As this graph has only one end, namely~$\omega$, we see that there are infinite circles, directed and weak ones, containing only bold black edges. 
    This shows already that, in general, it is not possible to find spanning $r$-reachable sets that do not contain directed or weak infinite circles. 
    So there does not exist a stronger version of Theorem~\ref{thm:pseudo-edmonds} in the sense that the edges of the underlying multigraph of every spanning pseudo-arborescences form a topological spanning tree in the Freudenthal compactification of the underlying multigraph.
    
    \begin{figure}[htbp]
        \centering
        \includegraphics[width=7.3cm]{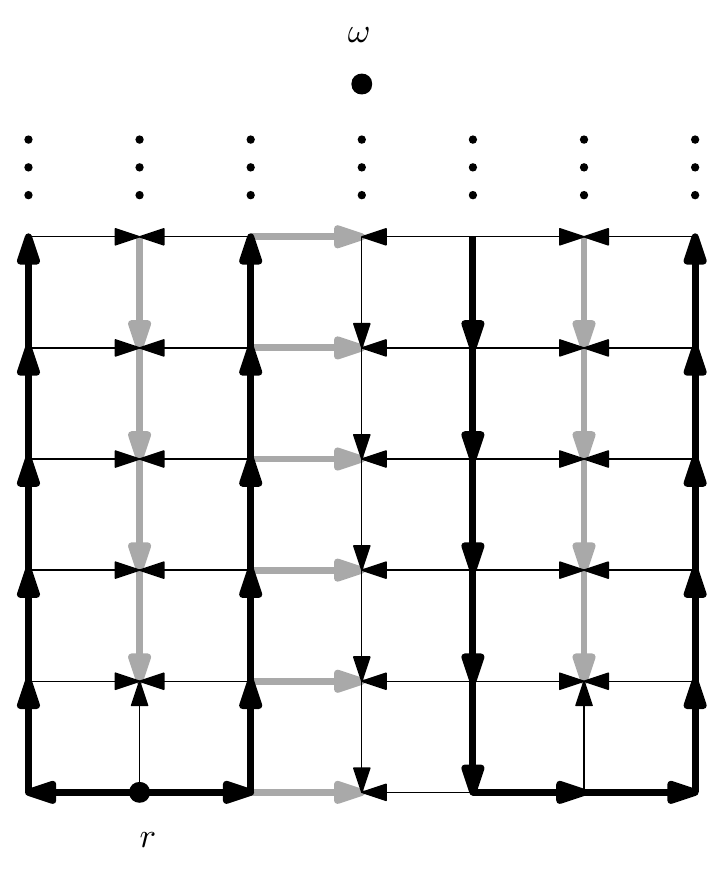}
        \caption{An example of a graph with a marked vertex~$r$ where the closure of any spanning $r$-reachable set contains an infinite circle and multiple arcs to the end~$\omega$ and certain vertices.}
        \label{fig:multiple_arcs}
    \end{figure}

    The graph in Figure~\ref{fig:multiple_arcs} shows furthermore that, in general, we cannot find spanning $r$-reachable sets~$F$ such that there exists a unique directed arc from $r$ to every vertex and every end of the graph inside~$\overline{F}$. 
    In the example we have two different directed arcs from~$r$ to the end~$\omega$ that contain only bold black edges and are therefore in every spanning $r$-reachable set of this graph. 
    Hence, we also get two different directed arcs from~$r$ to every vertex on the infinite directed circle that consists only of bold black edges. 
\end{example}

Although, in general, spanning pseudo-arborescences do not behave like trees in the sense that their underlying graphs correspond to topological spanning trees, they do so in a local sense. 
We conclude this section with our second main result, Theorem~\ref{thm:weak-r-reach_charaterisation}, characterising those spanning $r$-reachable sets that are inclusion-wise minimal via some local tree-like properties. 
In particular, we obtain the absence of finite cycles (directed or weak ones) in any spanning pseudo-arborescence.  
As mentioned before, we will prove a slightly stronger version of the theorem. 

\begin{theorem}
    \label{thm:r-reach_charaterisation}
    Let $G$ be a locally finite weakly connected digraph and~${r \in V(G) \cup \Omega(G)}$. 
    Then the following statements are equivalent for a spanning $r$-reachable set $F$ of $G$: 
    \begin{enumerate}
        [label=(\roman*)]
        \item \label{item:char1}
            $F$ is a spanning pseudo-arborescence rooted in $r$. 
        \item \label{item:char2}
            For every vertex $v \neq r$ of $G$ there is a unique edge in $F$ whose head is $v$, 
            and no edge in $F$ has $r$ as its head. 
        \item \label{item:char3}
            For every weak component $T$ of $G[F]$ the following holds: 
            If $r \in V(T)$, then $T$ is an arborescence rooted in $r$. 
            Otherwise, $T$ is an arborescence rooted in some end of $T$. 
    \end{enumerate}
\end{theorem}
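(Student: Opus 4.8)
The plan is to prove the equivalence of the three statements via the cycle $\ref{item:char1} \Rightarrow \ref{item:char2} \Rightarrow \ref{item:char3} \Rightarrow \ref{item:char1}$, exploiting the combinatorial characterisation of $r$-reachable sets together with the structural lemmas established earlier. Throughout I work with a spanning $r$-reachable set $F$ and freely use Remark~\ref{rem:basic_rem}, Lemma~\ref{lem:dir_rooted_arcs}, and the equivalences in Lemma~\ref{lem:equi_lemma_arc}.

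\medskip

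\emph{From \ref{item:char1} to \ref{item:char2}.} First I would establish the existence part: for every vertex $v \neq r$, some edge of $F$ has head $v$. This is exactly Remark~\ref{rem:basic_rem} applied to $M := \{v\}$, which yields $\abs{F \cap \overrightarrow{E}(V(G) \setminus \{v\}, \{v\})} \geq 1$. For the uniqueness, I argue by contradiction using minimality: if two distinct edges $e_1, e_2 \in F$ both had head $v$, I would show that $F \setminus \{e_i\}$ is still $r$-reachable for $V(G) \setminus \{r\}$ for one choice of $i$, contradicting inclusion-wise minimality of $F$. The idea is that any finite cut $E(X,Y)$ with $r \in \overline{X}$ and $Y \cap (V(G) \setminus \{r\}) \neq \emptyset$ that is ``witnessed'' through $v$ (i.e.\ where removing one of these parallel-headed edges could matter) must have $v \in Y$, and then both $e_1$ and $e_2$ lie in $\overrightarrow{E}(X,Y)$ only if their tails lie in $X$; a careful local swapping argument shows the cut is still met after deleting the superfluous edge. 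Finally, that no edge of $F$ has head $r$: if $r \in V(G)$, any such edge $e$ can be removed, since reachability cuts only ever require edges \emph{into} $Y$ with $r \in \overline{X}$, never into $r$ itself; this again contradicts minimality.

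\medskip

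\emph{From \ref{item:char2} to \ref{item:char3}.} Here I translate the degree condition into the global structure of each weak component $T$ of $G[F]$. The in-degree condition ($d^-(v) = 1$ for $v \neq r$, and $d^-(r) = 0$ when $r \in V(T)$) is precisely the defining local condition for an out-arborescence in the sense defined in the preliminaries, \emph{provided} I can show that the underlying multigraph of each $T$ is a tree. The absence of finite cycles follows from the in-degree count: a finite directed cycle would force some vertex to have in-degree at least two within $T$, and a finite \emph{weak} cycle likewise produces a vertex of in-degree $\geq 2$ since, traversing the weak cycle, at least two edges must point ``into'' some common vertex by a parity or counting argument on edge orientations around the cycle. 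To handle the root end case, I use Lemma~\ref{lem:dir_rooted_arcs}: since $F$ is spanning $r$-reachable, every vertex of $T$ is reached by a directed arc from $r$ in $\overline{F}$; when $r \notin V(T)$ this arc must enter $T$ through an end, and combined with the tree structure and the in-degree condition, the unique ``incoming direction'' of $T$ concentrates at a single end, giving the backwards directed ray. I would verify that $T$ is an arborescence rooted in that end by checking the definition: the underlying multigraph is a tree, every vertex has in-degree $1$, and the distinguished end contains a backwards directed ray.

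\medskip

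\emph{From \ref{item:char3} to \ref{item:char1}.} Since $F$ is assumed to be a spanning $r$-reachable set, I only need to verify inclusion-wise minimality. I argue that deleting any edge $e = ab \in F$ destroys the $r$-reachability property. Let $T$ be the weak component of $G[F]$ containing $e$. By \ref{item:char3}, $T$ is an arborescence (rooted in $r$ or in an end), so $b$ has in-degree exactly $1$ in $F$, namely via $e$; moreover, since the underlying graph of $T$ is a tree, deleting $e$ splits $T$ into two parts, and I let $Y$ be the vertex set of the component containing $b$ (together with the rest of $G$ appropriately partitioned so that $r \in \overline{X}$). The key point, which I expect to be the main obstacle, is to produce a \emph{finite} cut $E(X,Y)$ with $r \in \overline{X}$, $Y \cap (V(G) \setminus \{r\}) \neq \emptyset$, and $\overrightarrow{E}(X,Y) \cap (F \setminus \{e\}) = \emptyset$; I must ensure the cut is finite (using local finiteness and the tree structure to bound the ``downstream'' boundary of $e$) and that $e$ is genuinely the only $F$-edge crossing it in the forward direction. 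In the end-rooted case this requires care in choosing which side of the removed edge lies ``towards'' the root end versus ``away'' from it, and in confirming that the combinatorial closure places $r$ on the correct side. Once such a cut is exhibited, $F \setminus \{e\}$ fails the $r$-reachability condition witnessed by $E(X,Y)$, establishing minimality and hence \ref{item:char1}.

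\medskip

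The subtlest step overall is the last implication, where the interplay between the finiteness of the separating cut, the location of $r$ (vertex versus end) relative to the two sides, and the tree structure of $T$ must all be reconciled simultaneously; the earlier topological machinery (Lemma~\ref{lem:jumping-arc} and Lemma~\ref{lem:dir_rooted_arcs}) is what makes the end-rooted case tractable.
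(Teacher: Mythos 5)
Your overall plan --- the cycle \ref{item:char1}$\,\Rightarrow\,$\ref{item:char2}$\,\Rightarrow\,$\ref{item:char3}$\,\Rightarrow\,$\ref{item:char1} --- is the same as the paper's, and several pieces (existence of an incoming edge via Remark~\ref{rem:basic_rem}, removability of edges with head $r$, the weak-cycle argument) are fine. But three steps have genuine gaps. First, the uniqueness part of \ref{item:char1}$\,\Rightarrow\,$\ref{item:char2} is not actually proved: the ``careful local swapping argument'' is a placeholder, and the naive version fails because one witnessing cut may have $e_1$ as its only forward $F$-edge while a different one has $e_2$, so neither edge is obviously superfluous. The paper resolves this by taking a directed $r$--$v$~arc ${A \subseteq \overline{F}}$ (Lemma~\ref{lem:dir_rooted_arcs}): $A$ contains exactly one edge with head $v$, so one of $e_1, e_2$, say $e_1$, avoids $A$; any finite cut ${E(X,Y)}$ witnessing that ${F \setminus \{e_1\}}$ is not spanning $r$-reachable has ${v \in Y}$, hence is crossed forwards by an edge of $A$ (Lemma~\ref{lem:equi_lemma_arc}), which lies in $F$ and differs from $e_1$ --- contradicting that $e_1$ was the unique forward $F$-edge of that cut. (An uncrossing argument on the two witnessing cuts would also work, but you would have to carry it out.) Second, in \ref{item:char2}$\,\Rightarrow\,$\ref{item:char3} your claim that a finite \emph{directed} cycle ``forces some vertex to have in-degree at least two within $T$'' is false: on a directed cycle every vertex has in-degree exactly one, which is perfectly compatible with \ref{item:char2} when ${r \notin V(C)}$. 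The missing ingredient is the standing hypothesis that $F$ is spanning $r$-reachable: Remark~\ref{rem:basic_rem} applied to ${M = V(C)}$ yields an $F$-edge entering $V(C)$ from outside, and only then does some vertex acquire two incoming $F$-edges.

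Third, and most seriously, your plan for \ref{item:char3}$\,\Rightarrow\,$\ref{item:char1} would not go through. Splitting the tree $T$ at ${e = uv}$ and letting $Y$ contain the ``downstream'' component gives no control over the cut ${E(X,Y)}$ \emph{in $G$}: that component may be infinite, and local finiteness does not make the boundary of an infinite vertex set finite, so the required finite cut need not exist for that choice of $Y$. The difficulty you flag about placing $r$ on the correct side in the end-rooted case is a symptom of choosing the wrong cut. The correct (and much simpler) move is to take ${Y = \{v\}}$ for the head $v$ of $e$: by \ref{item:char3} the vertex $v$ has in-degree exactly $1$ in $F$ and ${v \neq r}$, so ${F \setminus \{e\}}$ contains no edge with head $v$ and therefore violates Remark~\ref{rem:basic_rem} for the singleton set ${\{v\}}$ --- no topological machinery is needed here at all.
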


\begin{proof}
    We start by proving the implication from~\ref{item:char1} to~\ref{item:char2}. 
    Let us first suppose for a contradiction that~$F$ contains an edge~$e$ whose head is~$r$. 
    Obviously, there is no finite cut~${E(X, Y)}$ of~$G$ such that~${r \in \overline{X}}$ and~${e \in \overrightarrow{E}(X, Y)}$. 
    Hence, ${F \setminus \lbrace e \rbrace}$ is a smaller spanning $r$-reachable set of~$G$ contradicting the minimality of~$F$. 
    
    Next let us consider an arbitrary vertex~${v \neq r}$ of~$G$. 
    We know by Remark~\ref{rem:basic_rem} that~$F$ contains at least one edge of ${\overrightarrow{E}(V(G) \setminus \lbrace v \rbrace, \lbrace v \rbrace)}$. 
    So~$F$ contains at least one edge whose head is~$v$. 
    
    Now suppose for a contradiction that there exists some vertex~${v \neq r}$ of~$G$ which is the head of at least two edges of~$F$, say~$e$ and~$f$. 
    We know by Lemma~\ref{lem:dir_rooted_arcs} that~$\overline{F}$ contains a directed $r$--$v$~arc~$A$. 
    Since the cut~${E(V(G) \setminus \lbrace v \rbrace, \lbrace v \rbrace)}$ is finite and~$A$ is a directed $r$--$v$~arc, we get that~$A$ must contain precisely one edge of~${\overrightarrow{E}(V(G) \setminus \lbrace v \rbrace, \lbrace v \rbrace)}$. 
    Hence, one of the edges~${e, f}$ is not contained in~$A$, say~$e$. 
    By the minimality of~$F$, we obtain that~${F \setminus \lbrace e \rbrace}$ cannot be a spanning $r$-reachable set of~$G$. 
    So there must exist a finite cut~${E(X, Y)}$ of~$G$ with~${r \in \overline{X}}$ such that~$e$ is the only edge in~${F \cap \overrightarrow{E}(X, Y)}$. 
    Now we have a contradiction since the head of~$e$ is~$v$ and lies in~$Y$, which means that the directed arc~$A$ contains at least one edge of~${\overrightarrow{E}(X, Y)}$ by Lemma~\ref{lem:equi_lemma_arc}, but such an edge is different from~$e$. 
    Therefore,~$e$ was not the only edge in~${F \cap \overrightarrow{E}(X, Y)}$. 
    
    We continue with the proof that statement~\ref{item:char2} implies statement~\ref{item:char3}.
    For this let us fix an arbitrary weak component~$T$ of~$G[F]$. 
    We now show that~$T$ is a tree. 
    Suppose for a contradiction that~$T$ contains a directed or weak cycle~$C$. 
    
    If~$C$ is a directed cycle, each vertex on~$C$ would already be a head of some edge of the cycle. 
    Hence,~$r$ cannot be a vertex on~$C$. 
    Applying Remark~\ref{rem:basic_rem} with the finite set~${V(C)}$, we obtain that there needs to be an edge~${uv}$ of~$F$ with~${v \in V(C)}$ and~${u \in V(G) \setminus V(C)}$. 
    So~$v$ is the head of two edges of~$F$, which contradicts statement~\ref{item:char2}. 
    
    In the case that~$C$ is a cycle, but not a directed one, take a maximal directed path on~$C$.
    Its endvertex is the head of two edges of~$C$. 
    So we get again a contradiction to statement~\ref{item:char2}. 
    We can conclude that~$T$ is a tree.  
    
    If~$r$ is a vertex of~$T$, then it is immediate from statement~\ref{item:char2} that~$T$ is an arborescence rooted in~$r$. 
    Otherwise, there needs to be a backwards directed ray~$R$ in~$T$ as each vertex different from~$r$ is the head of a unique edge of~$F$. 
    Let~$\omega$ be the end of~$T$ which contains~$R$. 
    Hence,~$T$ is an arborescence rooted in~$\omega$, completing the proof of this implication. 
    
    It remains to show the implication from~\ref{item:char3} to~\ref{item:char1}. 
    For this we assume statement~\ref{item:char3} and suppose for a contradiction that~$F$ is not minimal with respect to inclusion. 
    Hence, ${F' = F \setminus \lbrace e \rbrace}$ is a spanning $r$-reachable set as well for some~${e = uv \in F}$. 
    Let~$T$ be the weak component of~${G[F]}$ which contains~ $v$. 
    As~$T$ is an arborescence rooted in~$r$ or some end of~$T$, we get that no edge of~$F'$ has~$v$ as its head. 
    Note that~${r \neq v}$ because of the edge~${uv \in F}$. 
    Now we get a contradiction by applying Remark~\ref{rem:basic_rem} with~$F'$ and the set~${\lbrace v \rbrace}$, which tells us that~$F'$ needs contains an edge whose head is~$v$. 
\end{proof}

The question might arise whether we can be more specific in statement~\ref{item:char3} of Theorem~\ref{thm:r-reach_charaterisation} in the case when~$r$ is an end of~$G$. 
Unfortunately, it is not true that there has to exist a weak component of~${G[F]}$ whose unique backwards directed ray lies in~$r$. 
The reason for this is that the end~$r$ might be an accumulation point of a sequence of infinitely many different weak components of~${G[F]}$ in~$\abs{G}$ each of which contains a backwards directed ray to a different end of~$G$. 
It is not difficult to construct an example for this situation and so we omit such a description here. 
On the other hand if the end~${r \in \Omega(G)}$ is not an accumulation point of different ends of~$G$, then there exists at least one weak component of~${G[F]}$ whose backwards directed ray is contained in~$r$. 
To see this fix an arbitrary directed $r$--$v$~arc~$A$ inside~$\overline{F}$ for some vertex~$v$. 
Since~$F$ is a spanning $r$-reachable set of~$G$, we can find such an arc. 
If among all of the weak components of~${G[F]}$ which are met by~$A$, there is a first one with respect to the linear order of~$A$, then a backwards directed ray of this component is an initial segment of~$A$ and, therefore, contained in~$r$.
Note for the other case that tails of the backwards directed rays of each component of~${G[F]}$ that is met by~$A$ must be contained in~$A$. 
Since~$A$ is an arc, all these backwards directed rays must be contained in different ends of~$G$. 
These ends, however, would then have~$r$ as an accumulation point in~$\abs{G}$ contradicting the assumption on~$r$.

\section*{Acknowledgements}
J. Pascal Gollin was supported by the Institute for Basic Science (IBS-R029-C1).

Karl Heuer was partly supported by the European Research Council (ERC) under the European Union's Horizon 2020 research and innovation programme (ERC consolidator grant DISTRUCT, agreement No.\ 648527).

\begin{bibdiv}
\begin{biblist}

\bib{aharoni-thomassen}{article}{
   author={Aharoni, Ron},
   author={Thomassen, Carsten},
   title={Infinite, highly connected digraphs with no two arc-disjoint
   spanning trees},
   journal={J. Graph Theory},
   volume={13},
   date={1989},
   number={1},
   pages={71--74},
   issn={0364-9024},
   review={\MR{982868}},
   doi={10.1002/jgt.3190130110},
}

\bib{bang-jensen}{book}{
   author={Bang-Jensen, J\o rgen},
   author={Gutin, Gregory},
   title={Digraphs},
   series={Springer Monographs in Mathematics},
   edition={2},
   note={Theory, algorithms and applications},
   publisher={Springer-Verlag London, Ltd., London},
   date={2009},
   pages={xxii+795},
   isbn={978-1-84800-997-4},
   review={\MR{2472389}},
   doi={10.1007/978-1-84800-998-1},
}

\bib{diestel_buch}{book}{
   author={Diestel, Reinhard},
   title={Graph theory},
   series={Graduate Texts in Mathematics},
   volume={173},
   edition={5},
   publisher={Springer, Berlin},
   date={2018},
   pages={xviii+428},
   isbn={978-3-662-57560-4},
   isbn={978-3-662-53621-6},
   review={\MR{3822066}},
}

\bib{Diestel:topGTsurvey}{article}{
	author={Diestel, Reinhard}, 
	title={Locally finite graphs with ends: a topological approach}, 
	date={2012},
	eprint={0912.4213v3},
	note={Post-publication manuscript},
}

\bib{freud-equi}{article}{
   author={Diestel, Reinhard},
   author={K\"{u}hn, Daniela},
   title={Graph-theoretical versus topological ends of graphs},
   note={Dedicated to Crispin St. J. A. Nash-Williams},
   journal={J. Combin. Theory Ser. B},
   volume={87},
   date={2003},
   number={1},
   pages={197--206},
   issn={0095-8956},
   review={\MR{1967888}},
   doi={10.1016/S0095-8956(02)00034-5},
}

\bib{edmonds}{article}{
   author={Edmonds, Jack},
   title={Edge-disjoint branchings},
   conference={
      title={Combinatorial algorithms},
      address={Courant Comput. Sci. Sympos. 9, New York Univ., New York},
      date={1972},
   },
   book={
      publisher={Algorithmics Press, New York},
   },
   date={1973},
   pages={91--96},
   review={\MR{0351889}},
}

\bib{Joo:Edmonds}{article}{
   author={Jo\'{o}, Attila},
   title={Edmonds' branching theorem in digraphs without forward-infinite
   paths},
   journal={J. Graph Theory},
   volume={83},
   date={2016},
   number={3},
   pages={303--311},
   issn={0364-9024},
   review={\MR{3549511}},
   doi={10.1002/jgt.22001},
}

\bib{nash-williams}{article}{
   author={Nash-Williams, C. St. J. A.},
   title={Edge-disjoint spanning trees of finite graphs},
   journal={J. London Math. Soc.},
   volume={36},
   date={1961},
   pages={445--450},
   issn={0024-6107},
   review={\MR{0133253}},
   doi={10.1112/jlms/s1-36.1.445},
}

\bib{oxley}{article}{
   author={Oxley, James G.},
   title={On a packing problem for infinite graphs and independence spaces},
   journal={J. Combin. Theory Ser. B},
   volume={26},
   date={1979},
   number={2},
   pages={123--130},
   issn={0095-8956},
   review={\MR{532581}},
   doi={10.1016/0095-8956(79)90050-9},
}

\bib{Thomassen:edmonds}{misc}{
   author={Thomassen, Carsten},
   title={Edmond's branching theorem in digraphs without backward-infinite paths},
   note={unpublished manuscript, personal communication},
}

\bib{tutte}{article}{
   author={Tutte, W. T.},
   title={On the problem of decomposing a graph into $n$ connected factors},
   journal={J. London Math. Soc.},
   volume={36},
   date={1961},
   pages={221--230},
   issn={0024-6107},
   review={\MR{0140438}},
   doi={10.1112/jlms/s1-36.1.221},
}

\end{biblist}
\end{bibdiv}

\end{document}